\newtheorem{theorem}{Theorem}[section]
\newtheorem{lemma}[theorem]{Lemma}
\newtheorem{corollary}[theorem]{Corollary}
\theoremstyle{definition}
\newtheorem{example}[theorem]{Example}
\newtheorem{definition}[theorem]{Definition}
\DeclareMathOperator{\inter}{int}
\DeclareMathOperator{\conv}{conv}
\DeclareMathOperator{\vol}{vol}
\DeclareMathOperator{\cone}{cone}
\DeclareMathOperator{\cl}{cl}
\DeclareMathOperator{\aff}{aff}
\DeclareMathOperator{\V}{V}
\DeclareMathOperator{\R}{\mathbbm{R}}
\DeclareMathOperator{\Z}{\mathbbm{Z}}
\DeclareMathOperator{\Q}{\mathbbm{Q}}
\DeclareMathOperator{\0}{\mathbb{0}}
\DeclareMathOperator{\Grat}{Q}
\DeclareMathOperator{\den}{den}
\DeclareMathOperator{\denrat}{\widehat{den}}
\DeclareMathOperator{\coloneq}{\mathrel{\mathop:}=}
\DeclareMathOperator{\eqcolon}{=\mathrel{\mathop:}}
\newcommand{\gauss}[1]{\left\lfloor #1\right\rfloor}
\newcommand{\fracpart}[1]{\left\{#1\right\}}
\newcommand{\e}[1]{e(#1)}
\newcommand{\E}[2]{E(#1#2)}
\numberwithin{equation}{section}
\title{Lattice points in vector-dilated polytopes}
\author{Martin Henk}
\author{Eva Linke}
\address{Martin Henk and Eva Linke, Institut f\"ur Algebra und Geometrie, Universit\"at Mag\-deburg, Universit\"atsplatz 2, D-39106-Magdeburg,
Germany} \email{martin.henk@ovgu.de, eva.linke@ovgu.de}
\thanks{Second author is supported by the Deutsche Forschungsgemeinschaft within the
project He 2272/5-1.}
\begin{document}

\begin{abstract}
For $A\in\mathbb{Z}^{m\times n}$ we investigate the behaviour of the number of lattice points in $P_A(b)=\{x\in\mathbb{R}^n:Ax\leq b\}$, 
depending on the varying vector $b$. It is known that this number, restricted to a cone of constant combinatorial type of 
$P_A(b)$, is a quasi-polynomial function if b is an integral vector.
We extend this result to rational vectors $b$ and show that the coefficients themselves are piecewise-defined polynomials.
To this end, we use a theorem of McMullen on lattice points in Minkowski-sums of rational dilates of rational polytopes 
and take a closer look at the coefficients appearing there.
\end{abstract}

\maketitle
\section{Introduction}
Let $\R^n$ be the $n$-dimensional Euclidean space and $\Z^n$ the integral lattice.
By $\e{i}$, we denote the $i$th coordinate unit-vector, that is $\e{i}_{k}=0$ for all
$k\neq i$ and $\e{i}_i=1$. By $\E{i}{j}\in\R^n$, we denote the matrix with $\E{i}{j}_{kl}=0$
for $(k,l)\neq(i,j)$ and $\E{i}{j}_{ij}=1$. The origin of an appropriate dimension
is denoted by $\0=(0,\ldots,0)$.

A polytope is the convex hull $\conv\{v_1,\ldots,v_k\}$ of points $v_1,\ldots,v_k\in\R^n$.
It is called rational, if $v_1,\ldots,v_k$ can be chosen to be in $\Q^n$.
Equivalently, $P$ is a rational polytope, if and only if, there are $A\in\Q^{m\times n}$ and
$b\in\Q^m$ with $P=\{x\in\R^n:Ax\leq b\}$. It is clear, that $A$ can also be chosen to
be an integral matrix.
For a rational polytope $P$ we call the smallest positive integral (resp.\ rational) number $d$ such that $dP$ is an integral
polytope the \emph{(rational) denominator of $P$} and denote it by $\den(P)$ (resp. $\denrat(P)$).
Furthermore, $\vol(P)=\vol_n(P)$ denotes the volume of $P$, that is its $n$-dimensional
Lebesgue measure and $\vol_{\dim(P)}(P)$ is the Lebesgue-measure of $P$ with respect to its affine hull.

For a given integral 
$(m\times n)$-matrix $A$, let $P_A(b)\coloneq\{x\in\R^n:Ax\leq b\}$, $b\in\Q^m$. 
In this work we want to count the number 
of lattice points in $P_A(b)$ as a function in $b$ for a fixed matrix 
$A$. 

To this end, we consider matrices $A\in\Z^{m\times n}$ such that
$P_A(b)$ is bounded for all $b\in\Q^m$, that is, $\cone(A^\top)=\R^n$.
Here, $\cone(A^\top)$ is the cone generated by the rows of $A$, that is, the set of
all nonnegative linear combinations of rows of $A$.

We denote the number of lattice points in $P_A(b)$ by
\begin{equation*}
\Phi(A,b):=\#(P_A(b)\cap\Z^n),\quad b\in\Q^m.
\end{equation*}

Since we cannot expect uniform behavior of $\Phi(A,b)$ when the 
polytope $P_A(b)$ changes combinatorically, we consider subsets of $\Q^m$
on which the combinatorial structure of $P_A(\cdot)$ is constant. For this, we have to
consider the possible normal fans of $P_A(b)$. 

For a fixed vertex $v$ of a polytope $P_A(b)$, the \emph{normal cone} $\tau_v$ of 
$v$ is the set of all directions $u\in\R^n$, such that the function 
$x\mapsto u^\top x$, $x\in P_A(b)$, is maximized by $v$. By the definition of 
vertices as $0$-dimensional faces,
the normal cone $\tau_v$ of a vertex $v$ is full-dimensional.

We call the set of the normal cones of all vertices of $P_A(b)$ the \emph{normal 
fan}, denoted by $N_A(b)$.
Observe that this differs from the usual notion of the normal fan, which is a
polyhedral subdvision of $\R^n$ and hence contains also
lower dimensional normal cones. In our case it is enough to consider only 
the maximal cells. Still, the union of all normal cones in
$N_A(b)$ is $\R^n$, and the interiors of two normal cones in $N_A(b)$ do
not intersect. We refer to Ziegler \cite[Chapter 7]{ZIEGLER} for an introduction
to polyhedral fans.

For a given matrix $A$ and varying $b$, there are only finitely many possible normal fans and
the normal fan fixes the combinatorial structure of $P_A(b)$. Hence, in the 
following we will always fix the normal fan $N$. 
For a fixed normal fan $N$, let $C_N\subset\Q^m$ be the set of
all vectors $b$ such that $N_A(b)=N$.
The set $\{C_N: N=N_A(b) \text{ for some } b\in\Q^m\}$ is denoted by 
$\mathcal{C}_A$. Then for every $C\in\mathcal{C}_A$, its closure $\cl(C)$ is a polyhedral cone (cf.\ Lemma\nobreakspace \ref {lem:right_side_cones}). 
To state the main result in a comprehensive way, we fix some abbreviatory 
notation: For $x\in\R^n$, $y\in\Z_{\geq 0}^n$ we write 
$x^y:=\prod_{j=1}^nx_j^{y_j}$. For a $k$-tupel $I=(I_1,\ldots,I_k)\in\{0,\ldots,n\}^k$ 
we denote by $|I|_1=\sum_{j=1}^kI_j$  the usual $1$-norm.

\begin{theorem}\label{thm:rqp_right_sides}
Let $C=\cone\{h_1,\ldots,h_k\}\in\mathcal{C}_A$. 
Then $\Phi(A,b)$ is a quasi-polynomial function in $b\in\cl(C)\cap\Q^m$, that is,
\begin{equation*}
\Phi(A,b)=\sum_{\substack{J\in\{0,\ldots,n\}^k\\ |J|_1\leq n}}\Phi_J(A,b)b^J,
\end{equation*}
where $\Phi_J(A,b)=\Phi_J(A,b+\denrat(P_A(h_i))h_i)$ for all $J$ and $i$. Furthermore,
$\Phi_J(A,b)$ is a piecewise-defined polynomial of total degree $n-|J|_1$ in $b$
with
\begin{equation*}
\frac{\partial}{\partial b_l}\Phi_J(A,b)=
-(J_l+1)\Phi_{J+\e{l}}(A,b),
\end{equation*}
\end{theorem}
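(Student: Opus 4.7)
The plan is to derive the statement from McMullen's theorem on lattice points in Minkowski sums of rational dilates of rational polytopes, exploiting the fact that the combinatorial type is fixed across $\cl(C)$. The first step is the structural reduction: since all $P_A(b)$ for $b\in\cl(C)$ share the common normal fan $N$, and since polytopes with a prescribed normal fan are linearly parametrised by the facet offsets given by $b$, the map $b\mapsto P_A(b)$ is additive on $\cl(C)$. Writing a general $b$ as $b=\sum_{i=1}^k\lambda_i h_i$ with $\lambda_i\geq 0$, this yields the Minkowski decomposition
\[
P_A(b)\;=\;\sum_{i=1}^k\lambda_i\, P_A(h_i),
\]
presenting $P_A(b)$ as a nonnegative combination of the $k$ fixed rational polytopes $P_A(h_i)$.

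With this representation, I would invoke McMullen's theorem, which states that the number of lattice points in a Minkowski sum $\sum_i \lambda_i Q_i$ of nonnegative dilates of rational polytopes $Q_i$ is a quasi-polynomial of total degree at most $n$ in $\lambda\in\R_{\geq 0}^k$, whose coefficients are periodic in each $\lambda_i$ with period $\denrat(Q_i)$. Applying this to $Q_i=P_A(h_i)$, and reading $b^J$ via the cone-coordinate decomposition $b=\sum\lambda_i h_i$ (so that $b^J$ becomes $\lambda^J=\prod\lambda_i^{J_i}$), one obtains the expansion $\Phi(A,b)=\sum_{|J|_1\leq n}\Phi_J(A,b)\,b^J$. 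The invariance $\Phi_J(A,b)=\Phi_J(A,b+\denrat(P_A(h_i))h_i)$ is then a direct translation of the periodicity of McMullen's coefficients under $\lambda_i\mapsto\lambda_i+\denrat(P_A(h_i))$. Upgrading this bare periodicity to the sharper statement that each $\Phi_J$ is piecewise polynomial of total degree $n-|J|_1$ would require examining McMullen's explicit coefficient formulas — typically sums over lattice points of half-open parallelepipeds associated with the fan — and extracting their polynomial structure cell by cell.

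The differentiation identity $\partial_{b_l}\Phi_J(A,b)=-(J_l+1)\Phi_{J+\e{l}}(A,b)$ is the step I expect to be the main obstacle. Differentiating the expansion and applying the product rule in the cone coordinates gives
\[
\frac{\partial}{\partial b_l}\Phi(A,b)=\sum_J\bigl[\partial_{b_l}\Phi_J(A,b)\bigr]\,b^J+\sum_J J_l\,\Phi_J(A,b)\,b^{J-\e{l}},
\]
so the target identity amounts to a precise cancellation phenomenon between these two sums. The plan is to first establish uniqueness of the representation — the degree bound $\deg\Phi_J\leq n-|J|_1$ together with the explicit periodicity should force a unique expansion of the claimed form — and then to match the coefficient of $b^J$ on both sides. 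The factor $-(J_l+1)$ should emerge naturally from relating McMullen's parallelepiped sums at level $J+\e{l}$ to those at level $J$; pinning down this comparison with the correct sign is the delicate point that I would flag as the main technical hurdle.
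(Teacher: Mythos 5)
Your Minkowski decomposition $P_A(b)=\sum_i\lambda_i P_A(h_i)$ via Lemma~\ref{lem:right_side_mink_sums}, followed by McMullen's theorem, is exactly the paper's starting point, and you correctly flag the derivative identity as the crux. But there are two genuine gaps.

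First, a well-definedness issue you pass over: when $k>m$, the representation $b=\sum_i\lambda_i h_i$ is not unique, and neither is the nonnegativity restriction automatic once you want to differentiate. The paper spends a nontrivial portion of the argument showing (via Corollary~\ref{rem:alpha_F}) that $\Grat_I(P_A(h_1),\ldots,P_A(h_k),\lambda)$ depends only on the class $[\lambda]$ and that the resulting polynomial in $\lambda$ is constant on all of $[\mu]$, not just $[\mu]\cap\R_{\geq 0}^k$. Only then can one legitimately fix a concrete section $\lambda(b)=(H^{-1}b,0,\ldots,0)$ with $H=(h_1,\ldots,h_m)$ invertible, which is what makes $\Phi_J(A,b)$ a well-defined function of $b$ rather than of a choice of $\lambda$. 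Your proposal never makes this choice and so $\Phi_J$ is not yet defined.

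Second, and more seriously, your plan to prove $\partial_{b_l}\Phi_J=-(J_l+1)\Phi_{J+\e{l}}$ by ``uniqueness of the expansion plus coefficient matching'' directly in the $b$-variables does not go through as stated. Naive uniqueness is false: for instance $f_{\0}(b)b^{\0}+f_{\e{l}}(b)b^{\e{l}}=0$ has the nontrivial solution $f_{\0}(b)=-b_l$, $f_{\e{l}}(b)=1$, both within the allowed degree bounds. The cancellation you seek can only be extracted using the periodicity of the coefficients, which is along the directions $h_i$ (with periods $\denrat(P_A(h_i))$), not along the standard coordinate axes of $b$; carrying this out directly in $b$-coordinates would be a mess. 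The paper's route avoids this entirely: it first proves the derivative identity for the $\Grat_I$ coefficients \emph{in the $\lambda$-coordinates} (Theorem~\ref{thm:rqp_polycoeffs_diff_mink_sums}, via Lemma~\ref{lem:rqp_coeffs_poly_mink_sums}, where periodicity is coordinatewise and the finite-difference induction works cleanly), and then transports it to $b$-coordinates through the linear substitution $\lambda=(H^{-1}b,0,\ldots,0)$, the multivariable chain rule, and a multinomial-coefficient computation over the matrices $K\in\mathcal{M}_2(I,J)$ that expresses $\Phi_J$ as an explicit linear combination of the $\Grat_I$. That change-of-variables step is where the factor $-(J_l+1)$ actually emerges; without it your proposal remains a sketch of the conclusion rather than a proof.
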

The proof of this theorem as well as an example can be found in Section\nobreakspace \ref {sec:vector_dilations}.

Dahmen and Micchelli, 1988, \cite[Theorem 3.1]{Dahmen&Micchelli} gave a structural
result for
\begin{equation*}
\Phi^=(A,b)\coloneq\#(\{x\in\R_{\geq 0}^n: Ax=b\}\cap\Z^n)
\end{equation*}
for a fixed matrix 
$A$ and suitable $b$ also inside cones $C\in\mathcal{C_A}$ if 
$b$ is an integral polytope. As a corollary \cite[Corollary 3.1]{Dahmen&Micchelli}, they get 
that $\Phi^=(A,\cdot)$ is a polynomial
in the integral variable $b\in C$, if $P^=_A(b)=\{x\in\R_{\geq 0}^n: Ax=b\}$ is integral.
Sturmfels, 1995, \cite{Sturmfels95} gave a formula for the difference between these polynomials 
and $\Phi^=(A,b)$, if $P^=_A(b)$ is not integral. The works make use of the theory of 
polyhedral splines and representation techniques of groups.
Mount, 1998, \cite{Mount} described methods for actually calculating the 
polynomials and cones, if $A$ is unimodular and $b$ integral. To this end,
Mount gave an alternative argument for \cite[Corollary 3.1]{Dahmen&Micchelli}
which we also follow in Section\nobreakspace \ref {sec:vector_dilations}.
Beck \cite{Beck1999,Beck2003} gave a more elementary proof of the quasi-polynomiality 
of $\Phi(A,b)$, if $b$ is integral. He also proved an Ehrhart reciprocity law for
vector dilated polytopes, that is, $\Phi(A,-b)=\#(\inter(P_A(b))\cap\Z^n)$, for $b\in\Z^n$.
Here, $\inter(P)$ denotes the interior of a polytope $P$.
Since $\Phi(A,b)=\Phi(tA,tb)$ for all $t\in\Q_{\geq 0}$, $A\in\Z^{m\times n}$ and
$b\in\Q^m$, this statement immediately carries over to rational vectors $b$ and we have
\begin{corollary}
\[\Phi(A,-b)=\#(\inter(P_A(b))\cap\Z^n)\]
for all $b\in\Q^m$.
\end{corollary}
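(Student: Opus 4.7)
The strategy is a scaling reduction to Beck's integral Ehrhart reciprocity law, which is already stated in the text for $b \in \Z^m$. Given an arbitrary rational $b \in \Q^m$, I would first choose a positive integer $t$ that clears all denominators of the entries of $b$, so that $tb \in \Z^m$. Since $A \in \Z^{m \times n}$ and $t \in \N$, the scaled matrix $tA$ is also integral, and $\cone((tA)^\top) = \cone(A^\top) = \R^n$, so the hypotheses of the paper apply to the pair $(tA, tb)$ and Beck's result is available for this pair.

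The next step is to notice that the defining inequality system $Ax \leq b$ is invariant under multiplication of both sides by the positive scalar $t$: for every $x \in \R^n$ one has $Ax \leq b \Leftrightarrow (tA)x \leq tb$. Hence, as subsets of $\R^n$, $P_{tA}(tb) = P_A(b)$, and the same identity holds with $-b$ in place of $b$. In particular $\inter(P_{tA}(tb)) \cap \Z^n = \inter(P_A(b)) \cap \Z^n$, so the integral reciprocity applied to $(tA, tb)$ gives
\[
\Phi(tA, -tb) \;=\; \#\bigl(\inter(P_{tA}(tb)) \cap \Z^n\bigr) \;=\; \#\bigl(\inter(P_A(b)) \cap \Z^n\bigr).
\]

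Combining this with the homogeneity identity $\Phi(A, b') = \Phi(tA, tb')$ stated in the text (applied with $b' = -b$) gives $\Phi(A, -b) = \Phi(tA, -tb)$, and plugging this into the displayed chain above yields the corollary. There is essentially no obstacle: the entire argument is the clearing-of-denominators trick, and the only point that needs verifying is that $P_A(b)$ really is unchanged by simultaneous rescaling of $A$ and $b$ by a common positive factor, which is immediate from $t > 0$.
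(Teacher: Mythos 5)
Your proof is correct and is exactly the argument the paper has in mind: the text justifies the corollary in one line via the identity $\Phi(A,b)=\Phi(tA,tb)$, which is precisely your clearing-of-denominators reduction to Beck's integral reciprocity. You have simply spelled out the details (choosing $t$, checking $P_{tA}(tb)=P_A(b)$, and applying the homogeneity to $-b$) that the paper leaves implicit.
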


To prove Theorem\nobreakspace \ref {thm:rqp_right_sides}, we first consider for rational polytopes 
$P_1,\ldots,P_k\subset\R^n$, $k\in\Z_{\geq 1}$ the function 
$\Grat(P_1,\ldots,P_k,\cdot):\Q_{\geq 0}^k\to\Z_{\geq 0}$ given by
\begin{equation*}
\Grat(P_1,\ldots,P_k,r):=\#\left(\left(\sum_{i=1}^kr_iP_i\right)\cap\Z^n\right),\quad
\text{for }r=(r_1,\ldots,r_k)\in\Q_{\geq 0}^k.
\end{equation*}
 

For $k=1$ such considerations go back to Ehrhart.
A function $p:\Q_{\geq 0}\to\R$ is called a \emph{quasi-polynomial with period $d$}
of degree at most $n$ if there exist periodic functions $p_i:\Q_{\geq 0}\to\R$,
$i=1,\ldots,n$, with period $d$ such that $p(r)=\sum_{i=0}^np_i(r)r^i$.

Ehrhart's Theorem \cite{EHRHART1962} states, that $\#(kP\cap\Z^n)$ is a quasi-polynomial
with period $\den(P)$ of degree $\dim(P)$ for $k\in\Z_{\geq 1}$.
The leading coefficient if this quasi-polynomial is $\vol_{\dim(P)}(P)$ for all $k\in\Z_{\geq 1}$ such that
$\aff(kP)$ contains integral points. Thus, if $P$ is full-dimensional, 
the leading coefficient is constant and equals $\vol(P)$ for all $k\in\Z_{\geq 1}$.
For more information on Ehrhart theory we refer to Beck and Robins \cite{Beck&Robins2006}.

This was recently generalized to rational dilation factors by Linke \cite{Linke}:

\begin{theorem}[{\cite{Linke,McMullen1978}}]\label{thm:my_rational_ehrhart}
Let $P$ be a rational polytope in $\R^n$. Then 
\begin{equation*}
\#(rP\cap\Z^n)=\sum_{i=0}^{\dim(P)}\Grat_i(P,r)r^i,\quad\text{for }r\in\Q_{\geq 0}
\end{equation*}
is a quasi-polynomial with period $\denrat(P)$ of degree $\dim(P)$.
Furthermore, 
$\Grat_i(P,\cdot)$ is a piecewise-defined polynomial of degree $n-i$, and 
\begin{equation*}
\Grat_i'(P,r)=-(i+1)\Grat_{i+1}(P,r),\quad i=0,\ldots,n-1,
\end{equation*} 
for all $r\geq 0$ such that $\Grat(P,\cdot)$ is (one-sided) continuous at
$r+k\denrat(P)$ for all $k\in\Z_{\geq 0}$
\end{theorem}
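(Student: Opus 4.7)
The strategy is to reduce the rational-dilation statement to classical integer Ehrhart theory by a rephasing trick, and then to track the phase-dependence of the resulting coefficients. Set $d=\denrat(P)$, so that $Q:=dP$ is an integer polytope, and fix a rational phase $s\in[0,d)\cap\Q$. Parametrize the relevant dilation factors as $r=s+kd$ with $k\in\Z_{\geq 0}$, so that
\[rP \;=\; sP+k\,Q\]
is a Minkowski sum of the fixed rational polytope $sP$ with an integer dilate of $Q$. McMullen's 1978 lattice-point valuation identity then implies that $k\mapsto\#(rP\cap\Z^n)$ is a polynomial in $k$ of degree $\dim(P)$, say $f_s(k)=\sum_{j=0}^{\dim(P)}c_j(s)\,k^j$. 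Substituting $k=(r-s)/d$ and collecting powers of $r$ yields the explicit representation
\[\Grat_i(P,r)\;=\;\sum_{j=i}^{\dim(P)}c_j(s)\,d^{-j}\binom{j}{i}(-s)^{j-i},\qquad s=r\bmod d,\]
which, assembled over all $s\in[0,d)\cap\Q$, gives the advertised quasi-polynomial representation with period $d$.

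For the piecewise-polynomial claim on $\Grat_i(P,\cdot)$, I would argue that $c_j(s)$ is piecewise constant in $s$ on $[0,d)$. The polynomial $f_s$ is determined by its values at $\dim(P)+1$ consecutive integers $k$, and each such count $\#((s+kd)P\cap\Z^n)$ is locally constant in $s$ away from finitely many breakpoints -- namely those $s\in[0,d)$ at which some lattice point lies on $\bd((s+kd)P)$. The corresponding entry thresholds $t_x=\min\{t>0:x\in tP\}$ run through a discrete rational set whose denominators are controlled by the facet data of $P$ (hence by $d$), so the breakpoint set in $[0,d)$ is finite. Plugging the piecewise-constant $c_j(s)$ into the displayed formula for $\Grat_i$ yields a piecewise polynomial in $r$ of degree $n-i$ on each piece, with the leading $s^{n-i}$ contribution coming from the $j=\dim(P)$ summand (whose coefficient is a nonzero multiple of $\vol_{\dim(P)}(dP)$).

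The derivative identity then follows by direct computation. On the interior of any piece, $c_j(s)$ is constant and $s=r-kd$ gives $ds/dr=1$; differentiating the displayed formula in $s$ and applying the binomial identity $(i+1)\binom{j}{i+1}=(j-i)\binom{j}{i}$ produces $\Grat_i'(P,r)=-(i+1)\Grat_{i+1}(P,r)$ immediately. The main obstacle is establishing the piecewise-constancy of $c_j(s)$: McMullen's theorem gives the polynomial form of $f_s$ in $k$ for each fixed $s$, but controlling the joint dependence on $(s,k)$ and making the finite breakpoint set in $[0,d)$ explicit requires additional bookkeeping, most naturally carried out either by a direct analysis of the values $f_s(0),\ldots,f_s(\dim(P))$ as $s$ varies, or by a vertex-wise Brion-type tangent-cone decomposition of $rP$.
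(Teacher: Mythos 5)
Your proposal is correct in its essentials and follows a route that is genuinely different from the one this paper (following \cite{Linke}) implicitly relies on. The paper does not reprove Theorem~\ref{thm:my_rational_ehrhart}; it cites Linke and McMullen, and when it establishes the multivariate generalization (Theorem~\ref{thm:rqp_polycoeffs_diff_mink_sums} via Lemma~\ref{lem:rqp_coeffs_poly_mink_sums}) it explicitly follows Linke's strategy of induction on degree: form the difference $q(r)=p(r+d)-p(r)$, observe it has degree $n-1$ with constant leading coefficients, apply the induction hypothesis to extract polynomiality of the lower coefficients, and only then read off the derivative identity from the explicit expansion of coefficients. You instead \emph{solve} the problem by a single change of variables: write $r=s+kd$, invoke McMullen to get $f_s(k)=\sum_j c_j(s)k^j$, substitute $k=(r-s)/d$, expand binomially, and collect powers of $r$. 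This gives the closed form $\Grat_i(P,r)=\sum_{j\ge i}c_j(s)\,d^{-j}\binom{j}{i}(-s)^{j-i}$ from which both piecewise-polynomiality and the derivative identity $(i+1)\binom{j}{i+1}=(j-i)\binom{j}{i}$ drop out in one line. That is a cleaner argument in the single-variable setting, and your worry at the end is actually less serious than you suggest: since $f_s$ has degree $\dim(P)$, the coefficients $c_j(s)$ are determined by the $\dim(P)+1$ values $f_s(0),\dots,f_s(\dim P)$ via a fixed Vandermonde system, and each $f_s(k)$ for $k$ in this bounded range is piecewise constant in $s\in[0,d)$ with finitely many breakpoints, so $c_j(s)$ is piecewise constant without any further uniformity argument in $k$. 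Where the paper's inductive approach earns its keep is in the multivariate generalization: your substitution method would require expanding a $k$-fold multinomial and tracking a $k$-dimensional breakpoint arrangement all at once, whereas the difference-operator induction peels off one variable at a time (this is precisely what Lemma~\ref{lem:inductive_step_rqp_mink_sum} organizes). One small caveat in your write-up: your degree count gives $\Grat_i$ degree $\dim(P)-i$ in $s$, which matches the claimed $n-i$ only when $P$ is full-dimensional; this is harmless (the paper's own multivariate statement assumes full dimension) but worth flagging if you state the univariate theorem for general $\dim(P)$.
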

We have $\Grat_0(P,0)=1$ and $\Grat_{\dim(P)}(P,r)=\vol_{\dim(P)}(P)$ for all $r\in\Q_{>0}$ such that
$\aff(rP)$ contains integral points.
Baldoni et al, 2010, \cite{Koeppe2010} study intermediate 
sums, interpolating between integrals and discrete sums over certain integral points in 
polytopes which results in a rational version of Ehrhart' Theorem for such intermediate
valuations. 

In the next section, we generalize these results to $k>1$.

\begin{definition}[Rational Quasi-polynomial in several unknowns]
A function $p:\Q_{\geq 0}^k\to\Q$ is called a \emph{rational quasi-polynomial of total 
degree $n$ with period $d=(d_1,\ldots,d_k)\in\Q^k$} if there exist periodic 
functions $p_I:\Q^k_{\geq 0}\to\Q$ for all $I\in\{0,\ldots,n\}^k$ with period $d_i$
in the $i$th component, $1\leq i\leq k$, such that
\begin{equation*}
p(r)=\sum_{\substack{I\in\{0,\ldots,n\}^k\\ |I|_1\leq n}}p_I(r)r^I.
\end{equation*}
We call $p_I(\cdot)$ the $I$th coefficient of $p$.
\end{definition}

The following theorem generalizes the univariate case to Minkowski sums of polytopes
and follows actually from McMullens proof of Theorem $7$ in \cite{McMullen1978} although not stated 
explicitely there.

\begin{theorem}[{\cite{McMullen1978}}]\label{thm:my_rational_ehrhart_mink_sum}
Let $P_1,\ldots,P_k$ be rational polytopes in $\R^n$. Then 
$\Grat(P_1,\ldots,P_k,\cdot)$
is a rational quasi-polynomial of  total degree $\dim(P_1+\ldots+P_k)$ with period 
$d=(\denrat(P_1),\ldots,\denrat(P_k))$.
$\Grat(P_1,\ldots,P_k,\cdot)$ is called the \emph{rational Ehrhart quasi-polynomial 
of $P_1,\ldots,P_k$}. 
The $I$th coefficient of $\Grat(P_1,\ldots,P_k,\cdot)$ is denoted by 
$\Grat_I(P_1,\ldots,P_k,\cdot)$. 
\end{theorem}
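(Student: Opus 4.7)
The plan is to reduce the multivariate rational-dilation problem to the much easier case where all but one summand are integral polytopes, and then to apply the multivariate polynomiality result that is the core of McMullen's Theorem~7.

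First I would separate the rational ``residue'' from the integer ``quotient'' in each $r_i$. Setting $d_i=\denrat(P_i)$, so that $d_iP_i$ is an integral polytope, I write each $r_i\in\Q_{\geq 0}$ as
\[
r_i = \alpha_i(r_i) + d_i n_i(r_i), \qquad n_i(r_i)=\gauss{r_i/d_i}\in\Z_{\geq 0},\quad \alpha_i(r_i)\in[0,d_i)\cap\Q.
\]
Using Minkowski additivity of scalar dilation, this yields the decomposition
\[
\sum_{i=1}^k r_iP_i \;=\; Q_0(\alpha) \;+\; \sum_{i=1}^k n_i(r_i)\,(d_iP_i),\qquad Q_0(\alpha)\coloneq\sum_{i=1}^k\alpha_i P_i,
\]
where the summands $d_iP_i$ are integral and $Q_0(\alpha)$ is a rational polytope depending only on the residues.

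Next I would fix the residue vector $\alpha=(\alpha_1,\ldots,\alpha_k)$ and prove that the function
\[
F_\alpha(n_1,\ldots,n_k) \;\coloneq\; \#\Bigl(\bigl(Q_0(\alpha)+n_1(d_1P_1)+\cdots+n_k(d_kP_k)\bigr)\cap\Z^n\Bigr)
\]
is a polynomial in $(n_1,\ldots,n_k)\in\Z_{\geq 0}^k$ of total degree at most $\dim(P_1+\cdots+P_k)$. This is the heart of the matter and is precisely what is obtained from McMullen's proof of Theorem~7 in \cite{McMullen1978}: choose a common refinement of the normal fans of $Q_0(\alpha),d_1P_1,\ldots,d_kP_k$, so that the vertices of the Minkowski sum factor consistently as $v_0+\sum_i n_iv_i$; apply Brion's decomposition to write $\pmmatrix$ as a sum over vertices of generating functions of the supporting tangent cones; and observe that each such summand depends polynomially on $n$, because the cones themselves stay fixed while only their apices translate by the integer-linear combinations $v_0+\sum_i n_iv_i$ of lattice-rational points. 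Summing the local polynomials over the finitely many vertices yields a polynomial of the stated total degree. The volume scaling $\vol_{\dim}(Q_0+\sum n_id_iP_i)$ shows the total degree equals $\dim(P_1+\cdots+P_k)$ when the sum is full-dimensional in its affine hull.

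Finally I would substitute back: inserting $n_i = (r_i-\alpha_i(r_i))/d_i$ into $F_{\alpha(r)}(n(r))$ and expanding monomial by monomial, each coefficient of $r^I$ becomes a polynomial expression in $(\alpha_1(r_1),\ldots,\alpha_k(r_k))$. Since $r_i\mapsto\alpha_i(r_i)$ is periodic with period $d_i$, the resulting coefficient $\Grat_I(P_1,\ldots,P_k,r)$ is periodic with period $d_i$ in $r_i$, and the identity
\[
\Grat(P_1,\ldots,P_k,r) \;=\; \sum_{\substack{I\in\{0,\ldots,n\}^k\\ |I|_1\leq \dim(P_1+\cdots+P_k)}} \Grat_I(P_1,\ldots,P_k,r)\,r^I
\]
exhibits $\Grat(P_1,\ldots,P_k,\cdot)$ as a rational quasi-polynomial with the claimed period and degree. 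The main obstacle is the polynomiality step: proving that a rational shift $Q_0(\alpha)$ does not spoil the polynomial dependence on the integer parameters $n_i$ for integral polytope summands, which is exactly the content of McMullen's argument and the reason the period in $r_i$ need only equal $\denrat(P_i)$ rather than some common multiple.
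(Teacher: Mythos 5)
The paper provides no proof of this theorem: it labels the result as McMullen's, noting only that it ``follows actually from McMullens proof of Theorem 7 in \cite{McMullen1978} although not stated explicitely there.'' So there is no in-paper argument against which to compare your sketch. Judged on its own merits, your reconstruction is a sound outline and captures the right reduction. The residue/quotient split $r_i = \alpha_i(r_i) + \denrat(P_i)\,n_i(r_i)$, the regrouping $\sum r_iP_i = Q_0(\alpha) + \sum n_i(\denrat(P_i)P_i)$ with integral summands $\denrat(P_i)P_i$ and a fixed rational offset $Q_0(\alpha)$, the polynomiality of the lattice-point count in $n$ for fixed $\alpha$, and the back-substitution producing coefficients depending only on the residues, are exactly the steps that yield a rational quasi-polynomial with period $(\denrat(P_1),\ldots,\denrat(P_k))$ and total degree $\dim(\sum P_i)$. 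This is the standard way to pass from McMullen's integer-dilate statement to the rational-dilate version, and the paper invokes McMullen's proof for precisely this purpose.

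Two remarks on the details. First, the polynomiality of $F_\alpha$ in $n$ follows more cleanly by treating $Q_0(\alpha)$ as an extra summand with fixed multiplicity $n_0 = 1$ and applying McMullen's theorem directly: the period attached to each integral summand $\denrat(P_i)P_i$ is $1$, so the coefficients are constant in $n_1,\ldots,n_k$, i.e.\ $F_\alpha$ is a polynomial. You instead sketch a Brion-type argument (tangent-cone generating functions translated by $v_0+\sum n_iv_i$); this works, but it is anachronistic relative to McMullen's 1978 paper, which argues via translation-invariant valuations rather than generating functions, and it deserves a bit more care to turn the ``apices translate by lattice vectors, so the local count is polynomial'' statement into a precise argument. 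Second, you write that substituting $n_i=(r_i-\alpha_i)/\denrat(P_i)$ makes each coefficient of $r^I$ ``a polynomial expression in $\alpha$''; this is not needed for the conclusion and is not obviously true as stated, since the coefficients of $F_\alpha$ themselves depend on $\alpha$ in a priori non-polynomial fashion. What you actually need, and what you do have, is merely that each resulting coefficient is a function of $\alpha(r)$ alone, hence periodic with period $\denrat(P_i)$ in $r_i$. With that phrasing corrected, the argument is complete.
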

Further,
\begin{equation*}
\Grat_I(P_1,\ldots,P_k,r)=\frac{m!}{I_1!\cdots I_k!}\V_I(P_1,\ldots,P_k)
\end{equation*}
for all $I\in\{0,\ldots,n\}^{k}$, with 
$|I|_1=\dim(P_1+\ldots+P_k)=:m$ and for all $r\in\Q_{>0}^k$ with
$\aff\left(\sum_{i=1}^kr_iP_i\right)$ contains integral points.
Here, $\V_I(P_1,\ldots,P_k)$ is the $I$th mixed volume of $P_1,\ldots,P_k$. For  
more information about mixed volumes we refer to Schneider \cite{SCHNEIDER93}. Briefly summarized,
the $m$-dimensional volume of $\sum_{i=1}^kr_iP_i$ is a homogenous polynomial of degree $m$.
It coefficients depend only on the involved polytopes and are up to a constant of $\frac{m!}{I_1!\cdots I_k!}$
called mixed volumes and denoted by $\V_I(P_1,\ldots,P_k)$. 
As in the univariate case, the volume of $r_1P_1+\ldots+r_kP_k$ is the leading
term of $\Grat(P_1,\ldots,P_k,r)$, where the leading term of a multivariate quasi-polynomial
of degree $m$, 
$p(x)=\sum_{|I|_1\leq m}p_I(x)x^I$ is $\sum_{|I|_1=m}p_I(x)x^I$.
Hence the mixed volumes appear as coefficients of $\Grat(P_1,\ldots,P_k,\cdot)$.

Again, we can show more about the coefficients. In what follows, we denote by $r\odot s:=(r_1s_1,\ldots,r_ks_k)$ the 
componentwise multiplication of $r,s\in\Q^k$. Then in analogy to Theorem\nobreakspace \ref {thm:my_rational_ehrhart} we have

\begin{theorem}\label{thm:rqp_polycoeffs_diff_mink_sums}
Let $P_1,\ldots,P_k$ be rational polytopes in $\R^n$ and let 
$\dim(P_1+\ldots+P_k)=n$. 
Then for $I\in\{0,\ldots,n\}^k$, $\Grat_I(P_1,\ldots,P_k,\cdot)$ 
is a piecewise-defined polynomial function of total 
degree $n-|I|_1$ and of degree $n-I_j$ in $r_j$, and
\begin{equation*}
\frac{\partial}{\partial r_j}\Grat_I(P_1,\ldots,P_k,r)=
-(I_j+1)\Grat_{I+e(j)}(P_1,\ldots,P_k,r),
\end{equation*}
for all $|I|_1<n$ and for all $r\in \Q_{\geq 0}$ such that 
$\Grat(P_1,\ldots,P_k,\cdot)$ is continuous at
$r+U\odot(\denrat(P_1),\ldots,\denrat(P_k))^\top$ for all $U\in\Z_{\geq 0}^k$.
\end{theorem}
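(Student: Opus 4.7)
The plan is to mirror the proof of the univariate Theorem~\ref{thm:my_rational_ehrhart} and extend it to the multivariate setting. Theorem~\ref{thm:my_rational_ehrhart_mink_sum} already supplies the rational quasi-polynomial decomposition $\Grat(P_1,\ldots,P_k,r)=\sum_{|I|_1\leq n}\Grat_I(P_1,\ldots,P_k,r)r^I$ with periodic coefficients, so it remains to pin down the piecewise polynomial shape of the $\Grat_I$, the degree bounds, and the differential relation.

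For the piecewise polynomial structure, I would decompose $\Q_{\geq 0}^k$ into open cells characterised by (i)~constant combinatorial type of $\sum_i r_i P_i$ and (ii)~no lattice point lying on the boundary of $\sum_i r_iP_i$. Off the locally finite union of hypersurfaces separating these cells, $\Grat$ is locally constant as an integer-valued function, and by the periodicity of each $\Grat_I$ in $r_j$ with period $\denrat(P_j)$ only finitely many cells arise within a fundamental box. On each cell, the identity $\Grat(r)=\sum_I\Grat_I(r)r^I$ forces $\sum_I\Grat_I(r)r^I$ to be constant in~$r$; combining this with the cell-wise polynomial representation of the $\Grat_I$ implicit in McMullen's proof of Theorem~\ref{thm:my_rational_ehrhart_mink_sum}, I would conclude that each $\Grat_I$ is a polynomial on the cell.

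For the degree bounds, I would anchor the induction at $|I|_1=n$, where the remark after Theorem~\ref{thm:my_rational_ehrhart_mink_sum} identifies $\Grat_I=\frac{n!}{I!}\V_I(P_1,\ldots,P_k)$ as a constant, matching the bound $n-|I|_1=0$. Downward induction on $|I|_1$ then proceeds by comparing the contributions at each total-degree level of $\sum_I\Grat_I(r)r^I=c_R$ on a cell $R$, peeling off one level at a time and using periodicity to propagate the bound $n-|I|_1$ to the next level. The per-variable bound $n-I_j$ in $r_j$ is then obtained by freezing the remaining variables at rational values and invoking Theorem~\ref{thm:my_rational_ehrhart} for the resulting one-parameter slice.

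For the derivative relation, I would work on a single cell $R$ where $\Grat\equiv c_R$. Differentiating the polynomial identity $c_R=\sum_I\Grat_I(r)r^I$ with respect to $r_j$ and reindexing yields
\begin{equation*}
0=\sum_I\Bigl[\tfrac{\partial\Grat_I}{\partial r_j}(r)+(I_j+1)\Grat_{I+\e{j}}(r)\Bigr]r^I,
\end{equation*}
with $\Grat_{I+\e{j}}\equiv 0$ for $|I+\e{j}|_1>n$. By the preceding step each bracket has total degree at most $n-|I|_1-1$, so this is a decomposition of the zero quasi-polynomial with strictly smaller coefficient degrees. Invoking uniqueness of such a decomposition under these degree constraints (the analogue of the triangular-system argument in the proof of Theorem~\ref{thm:my_rational_ehrhart}) forces each bracket to vanish on $R$, which is the claimed relation. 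The main obstacle I expect is exactly this uniqueness step: the triangular structure of the univariate case is replaced by the componentwise partial order on $\Z_{\geq 0}^k$, so the inductive propagation of uniqueness is more delicate and must be combined with the periodicity in each $r_j$ and with the continuity hypothesis at all periodic shifts $r+U\odot(\denrat(P_1),\ldots,\denrat(P_k))^\top$ in order to extend the cell-wise identity to all $r$ in the statement.
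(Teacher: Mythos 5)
Your overall framing — reduce to cells of a hyperplane arrangement where $\Grat$ is constant, then exploit the constancy of $\sum_I \Grat_I(r) r^I$ on a cell — matches the paper's setup (the paper's Lemma~\ref{lem:rqp_coeffs_poly_mink_sums} abstracts exactly this situation). But the two concrete steps you propose do not go through, and the key idea that makes the paper's argument work is absent from your sketch.

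The critical gap is the ``uniqueness of decomposition on a cell'' used for the derivative relation. The identity $0=\sum_I f_I(r)r^I$ holding on an open cell, with $\deg f_I\le n-|I|_1-1$, does \emph{not} force each $f_I$ to vanish. Already for $k=1$, $n=2$: take $f_1(r)=c$ (degree $0\le n-1-1$) and $f_0(r)=-cr$ (degree $1\le n-0-1$); then $f_1(r)r+f_0(r)\equiv 0$ with nonzero brackets. The degree bounds you cite, $\deg f_I\le n-|I|_1-1$, are exactly those satisfied by this counterexample, so no purely local ``triangular system'' inversion exists. You acknowledge this step as the ``main obstacle,'' and rightly so — it is not a matter of being delicate, it is false on a single cell. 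The resolution requires using the constancy on the whole family of shifted cells $r+U\odot d$, $U\in\Z_{\ge0}^k$, together with periodicity of the coefficients, not just one cell. The same objection applies to the ``peeling off one level at a time'' plan for polynomiality: on a fixed cell the equation $c_R=\sum_I\Grat_I(r)r^I$ does not isolate the contribution of a given total degree $|I|_1$.

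The ingredient you are missing is the finite-difference step that drives Lemma~\ref{lem:rqp_coeffs_poly_mink_sums}: form $q(r)=p(r+d_k\e{k})-p(r)$. Periodicity of the coefficients makes $q$ a rational quasi-polynomial of total degree $n-1$ whose coefficients are explicit binomial combinations $\sum_{j>I_k}p_{(\bar I,j)}(r)\binom{j}{I_k}d_k^{j-I_k}$ of the higher-index $p$'s, and whose top coefficients are again constants. Induction on the total degree $n$, not on $|I|_1$, then shows these combinations are polynomials of the right degree; the downward induction on $|J|_1$ you had in mind appears only afterwards, inside the inversion Lemma~\ref{lem:inductive_step_rqp_mink_sum}, where it is legitimate precisely because the $q^l_I$ are already known to be polynomials. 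The derivative relation is then obtained from the explicit inversion formula $p_J(\tilde r)=\sum_I\bigl(\prod_m\binom{I_m+J_m}{J_m}(-1)^{I_m}\bigr)p_{I+J,\0}\tilde r^I$, again coming from the full family of shift constraints, not from a single-cell uniqueness claim. Without this finite-difference reduction your proposal cannot establish either the degree bounds or the differential identity.
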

The proof of this theorem is given in the next section.
\section{Proof of Theorem\nobreakspace \ref {thm:rqp_polycoeffs_diff_mink_sums}}\label{sec:rqp_mink_sums}
We first fix some more notation: For a facet $F$ of a rational polytope $P$,
$\alpha(F)$ denotes the smallest positive rational number such that 
$\alpha(F)\aff(F)$ contains integral points. For a rational number $r\in\Q$ we denote
by $\fracpart{r}$ the fractional part of $r$, that is, $\fracpart{r}=r-\gauss{r}$
where $\gauss{\cdot}$ is the floor-function. 

For rational polytopes $P_1,\ldots,P_k\subset\R^n$ 
all possible Minkowski-sums $r_1P_1+\ldots+r_kP_k,$ $r\in\Q_{> 0}^k$ 
have the same normal fan.
This allows us to number the facets of all polytopes $r_1P_1+\ldots+r_kP_k$ consistently and
we can denote the facets of the polytope $r_1P_1+\ldots+r_kP_k$ by 
$F_1(r),\ldots,F_l(r)$.

With this notation, McMullen's work implies something stronger than Theorem\nobreakspace \ref {thm:my_rational_ehrhart_mink_sum}:
\begin{theorem}[McMullen, 1978 {\cite{McMullen1978}}]
$\Grat_I(P_1,\ldots,P_k,r)$ 
depends only on $P_1,\ldots,P_k$ and on the values
$\fracpart{\frac{1}{\alpha_{F_i(r)}} }$ for $i=1,\ldots,l$, but not on $\{r_j/\denrat(P_j)\}$, $j=1,\ldots,k$. 
%
\end{theorem}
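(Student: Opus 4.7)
The plan is to derive this refinement from the proof of Theorem~\ref{thm:my_rational_ehrhart_mink_sum} in \cite{McMullen1978}, tracking through McMullen's face decomposition to identify precisely which data of $r$ enter the coefficients.

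First, I would linearize the facet heights. Since the normal fan of $Q(r) = r_1 P_1 + \cdots + r_k P_k$ is constant on $\Q_{>0}^k$, the facets can be labelled consistently $F_1(r), \ldots, F_l(r)$ with primitive integer outer normals $u_i$. Writing $h_{ij} = \max_{x \in P_j} u_i^\top x$, each facet lies in the hyperplane $u_i^\top x = h_i(r)$ with $h_i(r) = \sum_j r_j h_{ij}$ linear in $r$. If $h_i(r) = p/q$ in lowest terms, a short computation shows $\alpha_{F_i(r)} = q/p$, hence $1/\alpha_{F_i(r)} = h_i(r)$ and $\fracpart{1/\alpha_{F_i(r)}} = \fracpart{h_i(r)}$. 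The refinement therefore amounts to the claim that $\Grat_I(P_1,\ldots,P_k,r)$ depends on $r$ only through the list of facet-height fractional parts.

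The core step is to invoke McMullen's face-by-face expansion from \cite{McMullen1978}: by a Möbius inversion on the face lattice of $Q(r)$, the enumerator $\Grat(P_1,\ldots,P_k,r)$ decomposes into contributions from the relative interior of each face $F$, and each contribution splits as a combinatorial factor (constant in $r$ by the normal-fan assumption) times a local density $\lambda_F(r)$. The density $\lambda_F(r)$ only records whether the appropriate dilate of $\aff(F)$ meets the integer lattice. Since every face of $Q(r)$ arises as the intersection of some subset of the facets $F_i(r)$, this meeting condition, and hence $\lambda_F(r)$, depends on $r$ only through the $h_i(r)$, or equivalently through the $\fracpart{h_i(r)} = \fracpart{1/\alpha_{F_i(r)}}$. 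Collecting like powers of $r$ in the expansion then writes each $\Grat_I$ as a combination of such local densities, so $\Grat_I$ inherits exactly the same dependence.

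The main obstacle is the careful bookkeeping needed to separate dependence on $\{r_j/\denrat(P_j)\}$ from dependence on $\fracpart{h_i(r)}$: at first glance one might expect both to appear. The key algebraic point that rules out the former is that $h_i(r) = \sum_j r_j h_{ij}$ is a sum whose denominator in lowest terms is generally unrelated to the individual denominators of the $r_j$. Because every lattice-containment question for faces of $Q(r)$ is detected through these combined quantities, the individual ratios $r_j/\denrat(P_j)$ are aggregated into the $h_i(r)$ and leave no residual footprint in the coefficients, yielding the stated refinement.
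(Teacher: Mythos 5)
The paper does not actually prove this theorem; it is attributed to McMullen \cite{McMullen1978} and cited without an argument, so there is no in-paper proof to match your sketch against.

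On its own terms, your proposal contains a concrete gap in the step that claims the local density $\lambda_F(r)$ for a lower-dimensional face $F$ is determined by the facet fractional parts because $F$ is an intersection of facets. Knowing $\fracpart{h_i(r)}$ for the facets $F_i$ containing $F$ does \emph{not} determine the lattice class of $\aff(F)$ modulo $\Z^n$. Concretely, if $F$ is cut out by $u_1^\top x = h_1$ and $u_2^\top x = h_2$ with $u_1 = (1,0)$, $u_2 = (1,2)$, the vertex sits at $\bigl(h_1, (h_2-h_1)/2\bigr)$, and $\fracpart{(h_2-h_1)/2}$ is not a function of $\fracpart{h_1}$ and $\fracpart{h_2}$. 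The paper's own worked example makes this visible: for $C_2$ and $T$, the constant coefficient
\begin{equation*}
\Grat_{(0,0)}(C_2,T,r,s)
\end{equation*}
carries the correction term governed by the parity of $\fracpart{3r+s}-\fracpart{r+s}-2r = \gauss{r+s}-\gauss{3r+s}$, which is a condition on the integer parts, not the fractional parts. Taking $(r,s)=(1,1)$ and $(r,s)=(\tfrac12,\tfrac12)$ gives $\fracpart{r+s}=\fracpart{3r+s}=0$ in both cases, yet $\Grat_{(0,0)}=1$ and $\Grat_{(0,0)}=\tfrac12$ respectively (one verifies $21$ and $7$ lattice points directly). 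So a proof that derives the claim solely from the facet heights, as your sketch does, cannot be made to work; the dependence must be tracked through lattice classes of affine hulls of all faces, and these carry strictly more information than the facet fractional parts. This also indicates that the statement, in the form quoted, is imprecise rather than something your sketch merely fails to reach.
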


For $r,s\in\Q_{\geq 0}^k$ we say, $r$ is equivalent to $s$ if $r_1P_1+\ldots+r_kP_k=s_1P_1+\ldots+s_kP_k$.
This relation is an equivalence relation and we denote the equivalence class of $r\in\Q_{\geq 0}^k$
by $[r]$. From McMullens Theorem immediately follows

\begin{corollary}\label{rem:alpha_F}
The coefficients of the rational Ehrhart quasi-polynomial of $P_1,\ldots,P_k$ 
are constant on $[r]$.
\end{corollary}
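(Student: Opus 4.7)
The plan is to derive this corollary as an immediate application of the McMullen theorem stated just above, combined with the definition of the equivalence relation $\sim$ on $\Q_{\geq 0}^k$.

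First I would unpack the hypothesis. By definition, $s\in[r]$ means
\[
s_1P_1+\ldots+s_kP_k=r_1P_1+\ldots+r_kP_k
\]
as subsets of $\R^n$. In particular, the two Minkowski sums are the same polytope, so they share the same facet structure. Since the consistent numbering $F_1(\cdot),\ldots,F_l(\cdot)$ of facets of $r_1P_1+\ldots+r_kP_k$ is dictated solely by the common normal fan (which is the same for every $r\in\Q_{>0}^k$ and, a fortiori, for every representative of a fixed equivalence class), one has $F_i(r)=F_i(s)$ for every $i=1,\ldots,l$, and in particular $\aff(F_i(r))=\aff(F_i(s))$.

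Next I would observe that the quantity $\alpha(F)$ attached to a facet $F$ depends only on $\aff(F)$, by its very definition as the smallest positive rational with $\alpha(F)\aff(F)\cap\Z^n\neq\emptyset$. Consequently $\alpha_{F_i(r)}=\alpha_{F_i(s)}$ for each $i$, and hence
\[
\fracpart{\tfrac{1}{\alpha_{F_i(r)}}}=\fracpart{\tfrac{1}{\alpha_{F_i(s)}}},\quad i=1,\ldots,l.
\]

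Finally I would apply McMullen's theorem: it asserts that $\Grat_I(P_1,\ldots,P_k,r)$ depends only on the polytopes $P_1,\ldots,P_k$ and on the list of fractional parts $\fracpart{1/\alpha_{F_i(r)}}$. Since these data agree for $r$ and $s$, one obtains $\Grat_I(P_1,\ldots,P_k,r)=\Grat_I(P_1,\ldots,P_k,s)$, which is precisely the assertion of the corollary. There is no real obstacle here, since McMullen's theorem does all the heavy lifting; the only point that deserves explicit mention is that the consistent facet labelling is well-defined on each equivalence class, which follows immediately from the fact that the underlying polytope is literally the same for all $r\in[r]$.
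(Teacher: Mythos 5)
Your proof is correct and follows exactly the route the paper intends: the paper states that the corollary ``immediately follows'' from McMullen's theorem, and your argument simply fills in the implicit chain (same Minkowski sum $\Rightarrow$ same facets $\Rightarrow$ same $\alpha_{F_i}$ and hence same fractional parts $\Rightarrow$ same coefficients). Nothing is missing; you have made explicit what the authors leave as an obvious consequence.
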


\begin{example}
As an example, we consider $C_2=\conv\{\binom{1}{1},\binom{-1}{1},\binom{-1}{-1},
\binom{1}{-1}\}$ and the triangle $T=\{\binom{0}{1},\binom{1}{-1},\binom{-1}{-1}\}$
(see Figure\nobreakspace \ref {fig:triangle_cube}).
\begin{figure}[hbt]\centering
\includegraphics{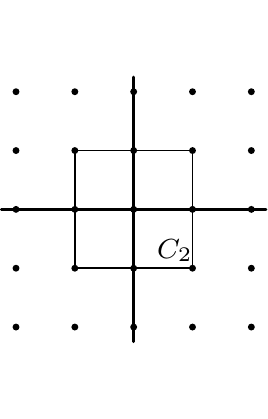}
\hspace{1mm}
\includegraphics{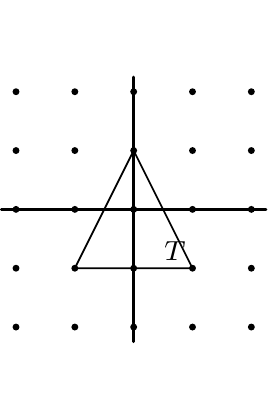}
\hspace{1mm}
\includegraphics{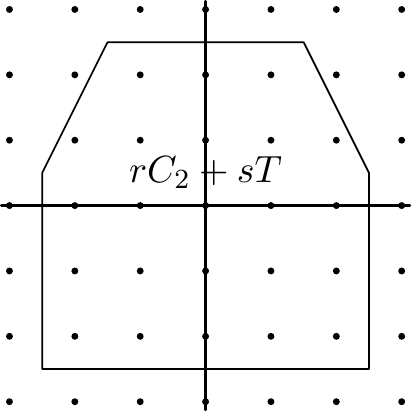}
\caption{$C_2$ and $T$ and $rC_2+sT$.}\label{fig:triangle_cube} 
\end{figure}
For $r,s\in\Q_{\geq 0}$, the sum $rC_2+sT$ has the following structure:
\begin{equation*}
\begin{split}
rC_2+sT&=\conv\left\{\binom{r}{r+s},\binom{-r}{r+s},
\binom{-(r+s)}{r-s},\binom{-(r+s)}{-(r+s)},\binom{(r+s)}{-(r+s)},\binom{r+s}{r-s}
\right\}\\
&=\{x\in\R^2:\quad\begin{array}[t]{rcl}-(r+s)\leq& x_1&\leq r+s,\\
-(r+s)\leq& x_2&\leq r+s,\\&\pm2x_1+x_2&\leq3r+s\}.\end{array} 
\end{split}
\end{equation*}
Thus for the edges (facets) $F$ of $rC_2+sT$ it holds that $\frac1{\alpha_F}\in\{r+s,3r+s\}$, 
and hence the coefficients of the rational Ehrhart quasi-polynomial depend only on
$\fracpart{r+s}$ and $\fracpart{3r+s}$ (see Corollary\nobreakspace \ref {rem:alpha_F}).

The coefficients as functions in $\fracpart{r+s}$ and 
$\fracpart{3r+s}$ are:
\begin{equation*}\begin{split}
\Grat_{(2,0)}(C_2,T,r,s)&=4,\\\Grat_{(1,1)}(C_2,T,r,s)&=8,\\
\Grat_{(0,2)}(C_2,T,r,s)&=2,\\
\Grat_{(1,0)}(C_2,T,r,s)&=-8\fracpart{r+s}+4,\\
\Grat_{(0,1)}(C_2,T,r,s)&=-2\fracpart{3r+s}-2\fracpart{r+s}+2,\\
\Grat_{(0,0)}(C_2,T,r,s)&=-\frac12\left(\fracpart{3r+s}^2+\fracpart{r+s}^2\right)
+3\fracpart{3r+s}\fracpart{r+s}
-\fracpart{r+s}\\&\qquad-\fracpart{3r+s}+1-\begin{cases}\frac12,&\text{if }
\fracpart{3r+s}-\fracpart{r+s}-2r\text{ odd},\\
0,&\text{otherwise}.\end{cases}
\end{split}\end{equation*}
\end{example}

To prove Theorem\nobreakspace \ref {thm:rqp_polycoeffs_diff_mink_sums} the following lemma is
the main step.
When considering $k$-tuples in $\{0,\ldots,n\}^k$, we allow them
to be added componentwise.
\begin{lemma}\label{lem:rqp_coeffs_poly_mink_sums}
Let $p:\Q^k\to\Q$ be a rational quasi-polynomial of total degree $n\in\Z_{\geq 1}$ 
with period $d\in\Q^k_{>0}$ and constant leading coefficients, that is,
\begin{equation} \label{eq:qp_form}
p(r)=\sum_{\substack{I\in\{0,\ldots,n\}^k\\ |I|_1\leq n}}p_I(r)r^I,
\quad r=(r_1,\ldots,r_k)\in\Q^k,
\end{equation}
where $p_I(r)\eqcolon p_I\in\Q$ for $|I|_1=n$, and $p_I:\Q^k\to\Q$ are periodic 
functions 
with period $d$ for $I\in \{0,\ldots,n\}^k$, $|I|_1<n$. Furthermore, suppose there 
exist a set $S\subset \Q^k$ and $c_U\in\Q$ for all $U\in\Z^k_{\geq 0}$ such that
\begin{equation*}
p(r+U\odot d)=c_U,\quad\text{for all }r\in S,\ U\in\Z^k_{\geq 0}.
\end{equation*}
Then $p_I:S\to\Q$ is a polynomial of total degree $n-|I|_1$ and of degree at most 
$n-I_j$ in $r_j$ with
\begin{equation*}
\frac{\partial}{\partial r_j}p_I(r)=-(I_j+1)p_{I+\e{j}}(r),
\end{equation*}
for all $I\in\{0,\ldots,n\}^k, |I|_1<n$ and for all $r\in S$.
\end{lemma}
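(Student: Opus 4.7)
The plan is to package the hypothesis as a single polynomial identity in $U$ and then to extract the coefficients $p_I$ by a downward induction on $|I|_1$. Fix any $r\in S$; periodicity of each $p_I$ with period $d$ gives
\begin{equation*}
c_U=p(r+U\odot d)=\sum_{|I|_1\leq n}p_I(r)\,(r+U\odot d)^I,\qquad U\in\Z^k_{\geq 0}.
\end{equation*}
Expanding each factor $(r_j+U_j d_j)^{I_j}$ by the binomial theorem and swapping summations rewrites the right-hand side as
\begin{equation*}
\sum_{|A|_1\leq n}d^A U^A f_A(r),\qquad f_A(r)\coloneq\sum_{\substack{I\geq A\\ |I|_1\leq n}}\binom{I}{A}r^{I-A}p_I(r),
\end{equation*}
a polynomial in $U$ of total degree at most $n$. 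Since $c_U$ is independent of $r\in S$ and a polynomial of bounded degree in $k$ variables is determined by its values on $\Z^k_{\geq 0}$, each coefficient $d^A f_A(r)$ must be constant on $S$; I denote its common value by $d^A f_A$.

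Isolating $p_A$ in the identity $f_A(r)=f_A$ produces
\begin{equation*}
p_A(r)=f_A-\sum_{\substack{I>A\\ |I|_1\leq n}}\binom{I}{A}r^{I-A}p_I(r),\qquad r\in S,
\end{equation*}
which I analyse by downward induction on $|A|_1$. The base case $|A|_1=n$ makes the sum empty and returns $p_A=f_A$, consistent with the hypothesis that the leading coefficients are constants. For $|A|_1<n$ the induction hypothesis makes each summand $r^{I-A}p_I(r)$ a polynomial of total degree $(|I|_1-|A|_1)+(n-|I|_1)=n-|A|_1$ and of $r_j$-degree at most $(I_j-A_j)+(n-I_j)=n-A_j$; these bounds transfer directly to $p_A$ restricted to $S$.

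For the derivative formula, I differentiate the displayed expression for $p_A$ with respect to $r_j$ and apply the induction hypothesis $\partial_{r_j}p_I=-(I_j+1)p_{I+\e{j}}$ (using the convention $p_J\equiv 0$ whenever $|J|_1>n$), obtaining
\begin{align*}
\frac{\partial p_A}{\partial r_j}(r)&=-\sum_{\substack{I\geq A+\e{j}\\ |I|_1\leq n}}\binom{I}{A}(I_j-A_j)\,r^{I-A-\e{j}}p_I(r)\\
&\quad+\sum_{\substack{I>A\\ |I|_1\leq n}}\binom{I}{A}(I_j+1)\,r^{I-A}p_{I+\e{j}}(r).
\end{align*}
The identity $(I_j-A_j)\binom{I}{A}=(A_j+1)\binom{I}{A+\e{j}}$ turns the first sum into $-(A_j+1)f_{A+\e{j}}(r)$; after the substitution $J=I+\e{j}$ together with $J_j\binom{J-\e{j}}{A}=(A_j+1)\binom{J}{A+\e{j}}$, the second sum becomes $(A_j+1)\bigl(f_{A+\e{j}}(r)-p_{A+\e{j}}(r)\bigr)$. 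Since $f_{A+\e{j}}$ is constant on $S$, the two $f_{A+\e{j}}$ contributions cancel and only $-(A_j+1)p_{A+\e{j}}(r)$ survives.

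The main obstacle is the bookkeeping in this last step: one has to recognise that the two formally distinct sums produced by the derivative are exactly the two halves of $f_{A+\e{j}}(r)$ (one complete, one with its isolated $p_{A+\e{j}}$ term removed), so that the $r$-independence of $f_{A+\e{j}}$ on $S$ provides precisely the telescoping needed to collapse the derivative to a single term.
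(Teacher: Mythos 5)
Your proof is correct and takes a genuinely different route. The paper's argument is a two-level induction: an outer induction on the total degree $n$ via the finite difference $q(r)=p(r+d_k\e{k})-p(r)$, which is again a rational quasi-polynomial of degree $n-1$ with constant leading coefficients, combined with the inner downward induction of Lemma~\ref{lem:inductive_step_rqp_mink_sum} on $|J|_1$ to recover each $p_J$ from the triangular linear combinations that the difference produces; the derivative formula is then obtained from an explicit inversion formula expressing $p_J$ in terms of the constant coefficients of the $p_I$, and differentiating that closed form. You instead expand $c_U=p(r+U\odot d)$ as a polynomial in $U$ and, using the $r$-independence of $c_U$ and $d\in\Q^k_{>0}$, conclude that every coefficient $f_A$ is constant on $S$; this yields the triangular recursion $p_A=f_A-\sum_{I>A}\binom{I}{A}r^{I-A}p_I$ and a single downward induction on $|A|_1$ that delivers both the degree bounds and the derivative formula. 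Your route avoids the outer induction on $n$ and the auxiliary lemma entirely, and the derivative computation is streamlined: after the reindexing $J=I+\e{j}$ and the two binomial identities you quote, the $f_{A+\e{j}}$ contributions cancel identically as polynomials, so as a minor remark you do not actually need their constancy at that step. Both proofs establish the degree claims only as upper bounds, in line with the lemma's informal phrasing.
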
 

As in the univariate case in \cite{Linke} the proof is by induction on the total degree. For 
simplification, we subdivide a part of the induction step which
gives the following lemma:

\begin{lemma}\label{lem:inductive_step_rqp_mink_sum}
For all $I\in\{0,\ldots,n\}^k$ with $|I|_1\leq n-1$ and $l\in\{1,
\ldots,k\}$ and for a fixed subset $S\subset \Q^k$ let $q_I^l:S\to\Q$ and 
$p_I:S\to\Q$ with
\begin{equation*}
q^l_I(r)=\sum_{j=I_l+1}^{n-|I|_1+I_l}p_{(I_1,\ldots,I_{l-1},j,I_{l+1},\ldots,
I_k)}(r)\,c^l_I(j),
\end{equation*}
for constants $c_I^l(j)$.
Furthermore suppose $q^l_I$ is a polynomial of total degree $n-1-|I|_1$ in $r$ and 
of  degree $n-1-I_h$ in $r_h$ for all $h\in\{1,\ldots,k\}$.
Then for $J\in\{0,\ldots,n\}^k$ with $1\leq |J|_1\leq n$, $p_{J}$ 
is a polynomial of total degree $n-|J|_1$ in 
$r$ and of degree at most $n-J_h$ in $r_h$ for all $h\in\{1,\ldots,k\}$.
\end{lemma}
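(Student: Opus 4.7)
The plan is downward induction on $|J|_1$, exploiting the fact that once a coordinate direction $l$ is fixed, the linear system relating the $q^l_I$'s to the $p$'s becomes triangular. For any $J$ with $J_l\geq 1$, set $I:=J-\e{l}$, so $|I|_1=|J|_1-1\leq n-1$ and $I_l+1=J_l$. Separating the $j=J_l$ term in the defining identity of $q^l_I$ yields
\begin{equation*}
q^l_I(r)=c^l_I(J_l)\,p_J(r)+\sum_{j=J_l+1}^{n-|J|_1+J_l}c^l_I(j)\,p_{(J_1,\ldots,J_{l-1},j,J_{l+1},\ldots,J_k)}(r),
\end{equation*}
which can be solved for $p_J(r)$ provided $c^l_I(J_l)\neq 0$. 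This nonvanishing is not an explicit hypothesis of the lemma, but it is automatic in the intended application inside the proof of Lemma~\ref{lem:rqp_coeffs_poly_mink_sums}, where $c^l_I(j)=\binom{j}{I_l}d_l^{j-I_l}$ and hence $c^l_I(I_l+1)=(I_l+1)d_l\neq 0$; I would carry it as a tacit assumption.

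For the base case $|J|_1=n$, picking any $l$ with $J_l\geq 1$ makes the summation range in the displayed identity empty, so $p_J(r)=q^l_{J-\e{l}}(r)/c^l_{J-\e{l}}(J_l)$. Since $q^l_{J-\e{l}}$ is a polynomial of total degree $n-1-(n-1)=0$, it is constant, and hence so is $p_J$, matching all required degree bounds.

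For the inductive step, assume the conclusion for every $J'$ with $|J|_1<|J'|_1\leq n$ and fix $J$ with $1\leq |J|_1<n$. Pick $l$ with $J_l\geq 1$, solve the displayed identity for $p_J(r)$, and bound degrees summand by summand. By hypothesis, $q^l_{J-\e{l}}$ has total degree $n-|J|_1$, degree $n-J_l$ in $r_l$, and degree $n-1-J_h$ in $r_h$ for $h\neq l$. The remaining summands correspond to tuples $(J_1,\ldots,j,\ldots,J_k)$ with $j>J_l$, of $1$-norm $|J|_1+(j-J_l)>|J|_1$, so by the induction hypothesis the associated $p$'s are polynomials of total degree $n-|J|_1-(j-J_l)$, of degree $n-j$ in $r_l$, and of degree $n-J_h$ in $r_h$ for $h\neq l$. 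Taking maxima gives $p_J$ total degree $\leq n-|J|_1$, degree $\leq\max(n-J_l,\,n-J_l-1)=n-J_l$ in $r_l$, and degree $\leq\max(n-1-J_h,\,n-J_h)=n-J_h$ in $r_h$ for $h\neq l$, exactly as claimed.

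The main subtle point is not the inversion itself but reconciling the asymmetric formula (a single $l$ is singled out for each $J$) with the symmetric per-coordinate degree bound: in the transversal directions $h\neq l$, the sharp bound $n-J_h$ is not supplied by $q^l_{J-\e{l}}$ (which only contributes $n-1-J_h$) but is produced by the induction hypothesis applied to the higher-index $p$'s. Once verified in one direction the bound holds in all directions, so the conclusion is independent of the choice of $l$.
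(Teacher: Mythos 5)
Your proof is correct and is essentially the same as the paper's: downward induction on $|J|_1$, isolating the $j=J_l$ term of $q^l_{J-\e{l}}$ to solve for $p_J$, and bounding degrees summand by summand (the paper simply fixes $l=k$ ``without loss of generality''). Your explicit remark that the argument tacitly requires $c^l_I(J_l)\neq 0$, which indeed holds in the application since $c^l_I(I_l+1)=(I_l+1)d_l$, is a useful point the paper leaves implicit.
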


\begin{proof}
We show this by induction on $|J|_1$.\\ For $|J|_1=n$ the statement 
is clear, since for $J_k\neq 0$, say, we have that 
\begin{equation*}
q^k_{(J_1,\ldots,J_{k-1},J_k-1)}(r)=p_{J}(r)\,c^k_{(J_1,\ldots,J_{k-1},J_k-1)}(J_k).
\end{equation*}
For $|J|_1<n$ and again, without loss of generality, $J_k\neq 0$, consider 
$q^k_{J-\e{k}}$, which is
\begin{equation*}\begin{split}
q^k_{J-\e{k}}(r)&=\sum_{i=J_k}^{n-|J|_1+J_k}p_{J+(i-J_k)\e{k}}(r)
\,c^k_{J-\e{k}}(i)\\
&=p_{J}(r)\,c^k_{J-\e{k}}(J_k)+\sum_{i=J_k+1}^{n-|J|_1+J_k}p_{J+(i-J_k)\e{k}}(r)
\,c^k_{J-\e{k}}(i).
\end{split}\end{equation*}
Thus 
\begin{equation*}\begin{split}
p_{J}(r)\,c^k_{J-\e{k}}(J_k)=q^k_{J-\e{k}}(r)-\sum_{i=J_k+1}^{n-|J|_1+J_k}
p_{J+(i-J_k)\e{k}}(r)\,c^k_{J-\e{k}}(i).
\end{split}\end{equation*}
By induction hypothesis for $i>J_k$, $p_{J+(i-J_k)\e{k}}(r)$ is a polynomial of total 
degree $n-|J|_1-i+J_k\leq n-|J|_1-1$ in $r$, of degree $n-i\leq n-J_k-1$ in $r_k$, 
and of degree $n-J_h$ in $r_h$ for all $h\in\{1,\ldots,k-1\}$ and $r\in S$. 
Furthermore, $q^k_{J-\e{k}}(r)$ is a polynomial of total degree $n-|J|_1$ in $r$, of 
degree $n-J_k$ in $r_k$, and of degree $n-1-J_h$ in $r_h$ for all 
$h\in\{1,\ldots,k-1\}$ and $r\in S$.

Thus $p_{J}(r)$ is a polynomial of total degree $n-|J|_1$ in $r$, of degree 
$n-J_k$ in $r_k$, and of degree $n-J_h$ in $r_h$ for all $h\in\{1,\ldots,k-1\}$ 
and $r\in S$.
\end{proof}

\begin{proof}[{Proof of Lemma\nobreakspace \ref {lem:rqp_coeffs_poly_mink_sums}}]
We prove the polynomiality result by induction on $n$. 

For $n=1$, we have 
$c_{\0}=p(r)=\sum_{|I|_1=1} p_I\cdot r^I+p_{\0}(r)$ for all 
$r\in S$.
Thus $p_{\0}(r)=c_{\0}-\sum_{|I|_1=1} p_I\cdot r^I$ for 
$r\in S$, which is a polynomial of total degree $n-0=1$ and degree $n-0=1$ in 
$r_h$ for all $h\in\{1,\ldots,k\}$ and $r\in S$.

Now let $n>1$. Consider $q(r)\coloneq p(r+d_k\e{k})-p(r)$.\\
Then $q(r+U\odot d)=p(r+(U+\e{k})\odot d)-p(r+U\odot d)=c_{U+\e{k}}-c_U$ for all
$r\in S$, $U\in Z_{\geq 0}^k$.

To shorten the notation in this 
proof, $I=(I_1,\ldots,I_k)$ and $J=(J_1,\ldots,J_k)$ 
are always vectors in $\{0,\ldots,n\}^k$
and we write $I\leq J$ if $I_l\leq J_l$ for all $l=1,\ldots,k$. 
Furthermore, for $r=(r_1,\ldots,r_k)\in S$ and 
$I\in\{0,\ldots,n\}^k$ we denote by 
$\bar{r}=(r_1,\ldots,r_{k-1})$ and $\bar{I}=(I_1,\ldots,I_{k-1})$, 
respectively, the vector with the last coordinate removed.

Then we get
\begin{equation*}\begin{split}
q(r)&=p(\bar{r},r_k+d_k)-p(r)=\sum_{|I|_1\leq n}p_{I}(r)
\left(\bar{r}^{\bar{I}}(r_k+d_k)^{I_k}-r^I\right)\\
&=\sum_{|I|_1\leq n}p_{I}(r)\left(
\sum_{j=0}^{I_k-1}\binom{I_k}{j}d_k^{I_k-j}\bar{r}^{\bar{I}}r_k^{j}\right)\\
&=\sum_{|\bar{I}|_1\leq n}\sum_{j=0}^{n-|\bar{I}|_1-1}\left(
\sum_{I_k=j+1}^{n-|\bar{I}|_1}p_{(\bar{I},I_k)}(r)\binom{I_k}{j}
d_k^{I_k-j}\right)\bar{r}^{\bar{I}}r_k^j.
\end{split}\end{equation*}
Interchanging the roles of $I_k$ and $j$ yields
\begin{equation*}\begin{split}
q(r)&=
\sum_{|\bar{I}|_1\leq n}\sum_{I_k=0}^{n-|\bar{I}|_1-1}\left(
\sum_{j=I_k+1}^{n-|\bar{I}|_1}
p_{(\bar{I},j)}(r)\binom{j}{I_k}
d_k^{j-I_k}\right)r^I\\
&=\sum_{|I|_1\leq n-1}\left(\sum_{j=I_k+1}^{n-|I|_1+I_k}p_{(\bar{I},j)}(r)
\binom{j}{I_k}d_k^{j-I_k}\right)r^I.
\end{split}\end{equation*}
Thus $q:\Q^k\to\Q$ is a rational quasi-polynomial of total degree $n-1$ with 
period $d$ and coefficients
\begin{equation*}
q_{I}(r)\coloneq\sum_{j=I_k+1}^{n-|I|_1+I_k}p_{(\bar{I},j)}(r)
\binom{j}{I_k}d_k^{j-I_k}.
\end{equation*}
The leading coefficient for $|I|=n-1$ is
\begin{equation*}
q_{I}(r)=\sum_{j=I_k+1}^{n-|I|_1+I_k}p_{(\bar{I},j)}(r)\binom{j}{I_k}d_k^{j-I_k}
=p_{I+\e{k}}(r)\cdot(I_k+1)d_k,
\end{equation*}
which is constant in $r$.
Thus by induction hypothesis we get that
\begin{equation*}
q_{I}(r)=\sum_{j=I_k+1}^{n-|I|_1+I_k}p_{(\bar{I},j)}(r)\binom{j}{I_k}d_k^{j-I_k}
\end{equation*}
is a polynomial of total degree $n-1-|I|_1$ in $r$ and of degree $n-1-I_h$ in 
$r_h$ for all $h\in\{1,\ldots,k\}$ and $r\in S$.
By renaming variables, this is also true if we replace $r_k$ and $I_k$ by 
arbitrary $r_j$ and $I_j$. Thus for all $I=(I_1,\ldots,I_k)$,
\begin{equation*}
q^l_I(r):=\sum_{j=I_l+1}^{n-|I|_1+I_l}
p_{(I_1,\ldots,I_{l-1},j,I_{l+1},\ldots,I_k)}(r)\binom{j}{I_l}d_l^{j-I_l}
\end{equation*}
is a polynomial of total degree $n-1-|I|_1$ in $r$ and of degree $n-1-I_h$ in
$r_h$ for all $h\in\{1,\ldots,k\}$ and $r\in S$.
Thus $p_{I}(r)$ is a polynomial of total degree $n-|I|_1$ in $r$ and of degree 
$n-I_h$ in $r_h$  for all $h\in\{1,\ldots,k\}$ and $r\in S$ by 
Lemma\nobreakspace \ref {lem:inductive_step_rqp_mink_sum}.
For finishing the inductive step it remains to show that $p_{\0}(r)$ 
is a polynomial of total degree $n$ in $r$ and of degree $n$ in $r_h$ for all 
$h\in\{1,\ldots,k\}$ and $r\in S$, which follows since
\begin{equation*}
c_{\0}=p_{\0}(r)+\sum_{0<|I|_1\leq n}p_I(r)r^I.
\end{equation*}
It remains to show that
\begin{equation*}
\frac{\partial}{\partial r_j}p_I(r)=-(I_j+1)p_{I+\e{j}}(r),
\end{equation*}
for all $I=(I_1,\ldots,I_k)\in\{0,\ldots,n\}^k, |I|_1<n, I_j<n$, and for all 
$r\in S$.
To this end, since $p_I(r)$ is a polynomial of total degree $n-|I|_1$ in $r$, we 
can write it as
\begin{equation*}
p_I(r)=\sum_{\substack{J\in\{0,\ldots,n\}^k\\|J|_1\leq n-|I|_1}}p_{I,J}r^J,
\end{equation*}
for some coefficients $p_{I,J}$.
Substituting this in Equation\nobreakspace \textup {(\ref {eq:qp_form})} yields (cf.\ proof of Theorem 1.7 in \cite{Linke}) 
the following form for all $\tilde{r}\in S$
\begin{equation*}
p_J(\tilde{r})=\sum_{\substack{|I|_1\leq n-|J|_1}}
\left(\prod_{m=1}^k\binom{I_m+J_m}{J_m}(-1)^{I_m}\right) 
p_{I+J,\0}\tilde{r}^{I}.
\end{equation*}
Together with $\alpha(i,j)\coloneq\binom{i+j}{j}(-1)^{i}$, for $|J|<n$ and 
$h\in\{1,\ldots,n\}$ we get
\begin{equation*}\begin{split}
\frac{\partial}{\partial r_h}p_J'(\tilde{r})=
&\sum_{\substack{|I|_1\leq n-|J|_1\\I_h\geq 1}}
\left(\prod_{m=1}^k\alpha(I_m,J_m)\right)p_{I+J,\0}I_h\tilde{r}^{I-\e{h}}\\
&=-(J_h+1)p_{J+\e{h}}(\tilde{r}),
\end{split}\end{equation*}
which finishes the proof.
\end{proof}
Now $\Grat(P_1,\ldots,P_k,r)$ is piecewise constant and satisfies the 
conditions of Lemma\nobreakspace \ref {lem:rqp_coeffs_poly_mink_sums}, if the sum $\sum_{i=1}^kP_i$ 
is full-dimensional. Thus, we can prove Theorem\nobreakspace \ref {thm:rqp_polycoeffs_diff_mink_sums}.

\begin{proof}[Proof of Theorem\nobreakspace \ref {thm:rqp_polycoeffs_diff_mink_sums}] 
By Theorem\nobreakspace \ref {thm:my_rational_ehrhart_mink_sum}, it is enough to show the 
statement
for 
\begin{equation*}
r\in S=[0,\denrat(P_1)]\times\ldots \times[0,\denrat(P_k)].
\end{equation*}
Since for $z\in \Z^n$ with $h(P_i,z)=\max\{z^\top x: x\in P_i\}$
we have that $z\in \sum_{i=1}^k r_iP_i$ if and only if 
$\sum_{i=1}^k r_ih(P_i,z)\geq 1$, $\Grat(P_1,\ldots,P_k,r)$ is constant on 
components of the hyperplane arrangement defined by the hyperplanes 
\begin{equation*}
\left\{\bigg\{x\in\R^n:\sum_{i=1}^k x_ih(P_i,z)=1\bigg\}:
z\in\Z^n\right\}.
\end{equation*}
For $r\in S$ it is enough to consider only  
$z\in\sum_{i=1}^k q(P_i)P_i\cap\Z^n$ since other integral points are never in 
$r_1P_1+\ldots+r_kP_k$ for $r\in S$. Thus $\Grat(P_1,\ldots,P_k,r)$ satisfies 
the conditions of Lemma\nobreakspace \ref {lem:rqp_coeffs_poly_mink_sums} for every maximal cell
of the arrangement
\begin{equation*}
\left\{\bigg\{x\in S:\sum_{i=1}^k x_ih(P_i,z)=1\bigg\}:
z\in\sum_{i=1}^k q(P_i)P_i\Z^n\right\}.
\end{equation*}
\end{proof}

\section{Proof of Theorem\nobreakspace \ref {thm:rqp_right_sides}}\label{sec:vector_dilations}

Recall that $P_A(b)\coloneq\{x\in\R^n:Ax\leq b\}$ for an integral 
$(m\times n)$-matrix $A$ and a rational vector $b\in\Q^m$ and 
\begin{equation*}
\Phi(A,b):=\#(P_A(b)\cap\Z^n),\quad b\in\Q^m.
\end{equation*}
Furthermore, we defined a $C\in\mathcal{C}_A$ to be a set of vectors $b\in\Q^k$
for which all polytopes $P_A(b)$ are combinatorially equivalent.

\begin{lemma}[{McMullen \cite[Section 2]{McMullen1973}}]\label{lem:right_side_cones}
For every $C\in\mathcal{C}_A$, the closure $\cl(C)$ is a polyhedral cone.
\end{lemma}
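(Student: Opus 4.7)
The plan is to describe $C$ explicitly by linear inequalities in $b$ coming from the feasibility of the candidate vertices prescribed by $N$, and then to identify $\cl(C)$ with the polyhedral cone cut out by these inequalities.

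Fix the normal fan $N = \{\tau_1,\dots,\tau_s\}$. Since each $\tau_j$ is full-dimensional and arises as the normal cone at some vertex of a polytope $P_A(b)$, there is an index set $I_j\subset\{1,\dots,m\}$ with $|I_j|=n$, $A_{I_j}$ invertible and $\tau_j=\cone\{a_i^\top:i\in I_j\}$, where $a_i$ denotes the $i$th row of $A$. For $b\in\Q^m$ define the candidate vertex $v_j(b) := A_{I_j}^{-1}b_{I_j}$, which depends linearly on $b$. Let
\begin{equation*}
K := \bigl\{b\in\R^m : a_k^\top A_{I_j}^{-1}b_{I_j}\leq b_k\ \text{for all }j\in\{1,\dots,s\}\text{ and }k\notin I_j\bigr\}.
\end{equation*}
The defining conditions are linear homogeneous inequalities in $b$, so $K$ is a polyhedral cone in $\R^m$. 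I will show $\cl(C)=K\cap\Q^m$ (in the sense that $\cl(C)$ is the rational trace of the real polyhedral cone $K$, which suffices for $\cl(C)$ to be polyhedral).

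First, if $b\in C$, then $v_j(b)$ is an actual vertex of $P_A(b)$, hence $Av_j(b)\leq b$, which precisely says $b\in K$; thus $C\subseteq K$ and so $\cl(C)\subseteq K$. For the reverse inclusion it suffices to show $\inter(K)\subseteq C$, because $K$ has nonempty interior (it contains any $b$ that actually produces the fan $N$ at a non-degenerate polytope). If $b\in\inter(K)$, every defining inequality is strict, so each $v_j(b)$ is feasible and its set of active constraints is exactly $I_j$. Since $A_{I_j}$ is invertible, $v_j(b)$ is a vertex of $P_A(b)$ whose normal cone is exactly $\tau_j$. As the cones $\tau_1,\dots,\tau_s$ already tile $\R^n$, no index set outside $\{I_1,\dots,I_s\}$ can contribute an additional vertex (any such vertex would carry a full-dimensional normal cone lying in the interior of some $\tau_j$, forcing it to coincide with $v_j(b)$). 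Therefore $N_A(b)=N$, i.e.\ $b\in C$. Taking closures, $K=\cl(\inter(K))\subseteq\cl(C)$, and hence $\cl(C)=K$ is polyhedral.

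The main technical point I expect to have to be careful about is the last sentence of the second paragraph, namely ruling out extra vertices on $\inter(K)$: one must use that the $\tau_j$'s already form a fan tiling $\R^n$, so that any additional vertex, having a full-dimensional normal cone by our boundedness assumption $\cone(A^\top)=\R^n$, would have to duplicate one of the $v_j(b)$ and thus not be additional. Everything else is bookkeeping with the homogeneous linear inequalities cutting out $K$.
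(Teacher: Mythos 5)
This lemma is quoted from McMullen \cite{McMullen1973} without a proof in the paper, so there is no argument in the text to compare yours to; what follows is an assessment of your sketch on its own terms. Your overall plan---describe $\cl(C)$ by finitely many homogeneous linear conditions on $b$ coming from the prescribed combinatorics---is the right idea, but as written it has two genuine gaps.

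First, your opening claim that each full-dimensional $\tau_j\in N$ can be written as $\cone\{a_i^\top : i\in I_j\}$ for some $I_j$ of size exactly $n$ with $A_{I_j}$ invertible fails whenever the fan $N$ is not simplicial. For instance, if $A$ consists of the eight facet normals of the regular octahedron in $\R^3$, each maximal cone of the octahedron's normal fan has four extreme rays and cannot be generated by only three rows. For such fans $C$ has positive codimension, the cone $K$ has to be described by linear equalities (solvability of an overdetermined system $A_{I_j}v=b_{I_j}$) in addition to inequalities, and the passage via the nonempty interior of $K$ has to be replaced by a relative-interior argument.

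Second, and more fundamentally, even when $N$ is simplicial and such an $I_j$ exists, the map $b\mapsto v_j(b)=A_{I_j}^{-1}b_{I_j}$ need not send every $b\in C$ to the vertex of $P_A(b)$ whose normal cone is $\tau_j$. Take $n=1$, $m=3$, $A=(1,-1,1)^\top$, so $P_A(b)=[-b_2,\min(b_1,b_3)]$ and $N=\{[0,\infty),(-\infty,0]\}$. Then $C=\{b : b_1+b_2>0,\; b_3+b_2>0\}$ and $\cl(C)=\{b : b_1+b_2\ge 0,\; b_3+b_2\ge 0\}$. Choosing $I_1=\{1\}$ gives $v_1(b)=b_1$; for $b=(2,1,1)\in C$ this point is not even contained in $P_A(b)=[-1,1]$, so your forward inclusion $C\subseteq K$ already fails, and your $K$ acquires the spurious inequality $b_1\le b_3$ which is not present in $\cl(C)$. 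Choosing $I_1=\{3\}$ just swaps the roles of the two indices. The underlying difficulty is that the set of constraints active at the vertex corresponding to $\tau_j$ can genuinely change as $b$ ranges over $C$, so no single $n$-element index set $I_j$ determines that vertex linearly in $b$ over all of $C$; a correct proof must describe $\cl(C)$ without committing to such a choice.
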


%
%

In general $P_A(b)+P_A(c)\subset P_A(b+c)$ for $b,c\in\Q^m$, but for $b,c\in\cl(C)$,
$C\in\mathcal{C}_A$ we have more:

\begin{lemma}[{McMullen \cite[Section 6]{McMullen1973}, Mount \cite[Theorem 2]{Mount}}]\label{lem:right_side_mink_sums}
Let $C\in\mathcal{C}_A$. For $b,c\in \cl(C)$ we have that $P_A(b)+P_A(c)=P_A(b+c)$.
\end{lemma}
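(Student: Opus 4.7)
The inclusion $P_A(b)+P_A(c)\subseteq P_A(b+c)$ is immediate from $A(x+y)=Ax+Ay$, so the content of the lemma is the reverse inclusion $P_A(b+c)\subseteq P_A(b)+P_A(c)$. The plan is to exhibit every vertex of $P_A(b+c)$ as a sum of a vertex of $P_A(b)$ and a vertex of $P_A(c)$ and then invoke convexity of $P_A(b)+P_A(c)$.

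To this end, fix the common normal fan $N$ on $C$. For each maximal cone $\tau\in N$, select the $n$ rows of $A$ whose inequalities are active at the vertex of $P_A(b)$ with normal cone $\tau$ for some (equivalently every) $b\in C$; since this vertex is $0$-dimensional, these rows form an invertible submatrix $A_\tau\in\Z^{n\times n}$. Denote by $b_\tau\in\Q^n$ the corresponding subvector of $b\in\Q^m$. Then the map
\begin{equation*}
v_\tau: \Q^m\to\Q^n,\quad v_\tau(b)\coloneq A_\tau^{-1}b_\tau,
\end{equation*}
is linear in $b$, and for $b\in C$ the point $v_\tau(b)$ is precisely the vertex of $P_A(b)$ with normal cone $\tau$. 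In particular $v_\tau(b)\in P_A(b)$, i.e.\ all the remaining (non-selected) inequalities $a_i^\top v_\tau(b)\le b_i$ are satisfied; since these are closed conditions in $b$, they persist on $\cl(C)$, so $v_\tau(b)\in P_A(b)$ for every $b\in\cl(C)$.

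Now take $b,c\in\cl(C)$. Since $\cl(C)$ is a convex cone (Lemma\nobreakspace \ref {lem:right_side_cones}), we have $b+c\in\cl(C)$, and by linearity
\begin{equation*}
v_\tau(b+c)=v_\tau(b)+v_\tau(c)\in P_A(b)+P_A(c)
\end{equation*}
for every maximal cone $\tau\in N$. Every vertex of $P_A(b+c)$ arises in this way (possibly with several $\tau$'s collapsing to the same vertex when $b+c$ lies on the boundary of $C$), so all vertices of $P_A(b+c)$ belong to the convex set $P_A(b)+P_A(c)$. Taking the convex hull yields $P_A(b+c)\subseteq P_A(b)+P_A(c)$, as required.

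The only subtle point is the boundary behaviour on $\cl(C)\setminus C$, where the polytopes may degenerate and $N_A(b)$ may be a strict coarsening of $N$. This is handled by the observation above that $v_\tau(b)\in P_A(b)$ is a closed condition in $b$ and that every vertex of $P_A(b+c)$ is still of the form $v_\tau(b+c)$ for some $\tau\in N$ indexing an active basis at that vertex; alternatively one can argue by Hausdorff-continuity of $b\mapsto P_A(b)$ and of Minkowski addition, passing to the limit from interior points of $C$ where the vertex argument is standard.
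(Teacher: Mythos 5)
Your proof is correct and takes the standard vertex-tracking approach; the paper presents this lemma without a proof of its own, only citing McMullen and Mount, whose arguments are essentially of this form (for each maximal cell $\tau$ of the fixed normal fan, the vertex with normal cone $\tau$ is a linear function $v_\tau(b)=A_\tau^{-1}b_\tau$ of $b$, which immediately gives $v_\tau(b+c)=v_\tau(b)+v_\tau(c)$). Two minor points deserve a word. First, the vertex of $P_A(b)$ with normal cone $\tau$ may have more than $n$ active constraints even for $b\in C$, so ``the $n$ rows\ldots{} active at the vertex'' should be read as ``a choice of $n$ active rows forming an invertible $n\times n$ submatrix $A_\tau$''. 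Second, the claim that every vertex of $P_A(b+c)$ for $b+c\in\cl(C)\setminus C$ is still of the form $v_\tau(b+c)$ with $\tau\in N$ needs an argument: choose $u$ in the interior of the vertex's normal cone $\sigma$ and also in the interior of some $\tau\in N$ (possible since $\sigma$ is full-dimensional and $N$ covers $\R^n$); the LP dual certificate $y\ge 0$ with $A^\top y=u$, supported on the rows of $A_\tau$, does not depend on the right-hand side, so $v_\tau(b+c)$ maximizes $u^\top x$ over $P_A(b+c)$ and therefore coincides with the given vertex. Your alternative route via Hausdorff continuity of $b\mapsto P_A(b)$ and of Minkowski addition closes the same gap and is equally acceptable.
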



Now let $C\in\mathcal{C}$, $\cl(C)=\cone\{h_1,\ldots,h_k\}$. Then, by Lemma\nobreakspace \ref {lem:right_side_mink_sums}
we have for $r,s\in\Q_{>0}^k$ that, $[r]=[s]$ if and only if 
$\sum_{i=1}^kr_ih_i=\sum_{i=1}^ks_ih_i$. Using this characterization, we can extend
the equivalence relation for all rational vectors in $\Q^k$:
For $r,s\in \Q^k$ we say, $r$ is equivalent to $s$, if $\sum_{i=1}^kr_ih_i=\sum_{i=1}^ks_ih_i$.
As before, we denote the equivalence class of $r$ by $[r]$.

Using 
Lemma\nobreakspace \ref {lem:right_side_mink_sums} and\nobreakspace Theorems\nobreakspace \ref {thm:my_rational_ehrhart_mink_sum} and\nobreakspace  \ref {thm:rqp_polycoeffs_diff_mink_sums} 
we get a structural result for $\Phi_A(b)$.

\begin{proof}[Proof of Theorem\nobreakspace \ref {thm:rqp_right_sides}]
First we fix $b\in C$ and let $\mu\in\R^k_{\geq 0}$ with $\sum_{i=1}^k\mu_ih_i=b$.
Then by 
Lemmas\nobreakspace \ref {lem:right_side_cones} and\nobreakspace  \ref {lem:right_side_mink_sums} for all 
$\lambda\in[\mu]\cap\R_{\geq 0}^k$ we have that
\begin{equation*}
\begin{split}
\#(P_A(b)\cap\Z^n)&=\#\left(P_A\left(\sum_{i=1}^m\lambda_ih_i\right)\cap\Z^n\right)\\
&=\#\left(\sum_{i=1}^m\lambda_iP_A(h_i)\cap\Z^n\right).
\end{split}
\end{equation*}
By Theorem\nobreakspace \ref {thm:my_rational_ehrhart_mink_sum} this is
\begin{equation*}
\#(P_A(b)\cap\Z^n)=\sum_{\substack{I\in\{0,\ldots,n\}^k\\|I|_1\leq n}}\Grat_I(P_A(h_1),\ldots,P_A(h_k),\lambda)\lambda^I.
\end{equation*}

To write this as a function in $b$ we need to fix a choice for $\lambda$. To do this
we show that, in fact the nonnegativity condition on $\lambda$ can be ommited:

By Corollary\nobreakspace \ref {rem:alpha_F} $\Grat_I(P_A(h_1),\ldots,P_A(h_k),\lambda)$ does not depend
on $\lambda$ but only on $[\mu]$. Thus we can write
\begin{equation*}
\sum_{\substack{I\in\{0,\ldots,n\}^k\\|I|_1\leq n}}\Grat_I(P_A(h_1),\ldots,P_A(h_k),\lambda)\lambda^I=\sum_{\substack{I\in\{0,\ldots,n\}^k\\|I|_1\leq n}}p_I([\mu])\lambda^I.
\end{equation*}
where $p_I([\mu])=\Grat_I(P_A(h_1),\ldots,P_A(h_k),\lambda)$ for all $\lambda\in[\mu]\cap\R_{\geq 0}^k$.
Thus $\#\left(P_A\left(\sum_{i=1}^m\lambda_ih_i\right)\cap\Z^n\right)$ restricted to
all $\lambda\in[\mu]\cap\R_{\geq 0}^k$ is in fact a polynomial in $\lambda$.
Since this polynomial is constant on the non-empty intersection of the affine plane 
$\sum_{i=1}^k\lambda_ih_i=b$ with the positive orthant,
we get that $\sum_{\substack{I\in\{0,\ldots,n\}^k\\|I|_1\leq n}}p_I([\mu])\lambda^I$
is constant for all $\lambda\in[\mu]$.

Hence we can write
\begin{equation*}
\#(P_A(b)\cap\Z^n)=\sum_{\substack{I\in\{0,\ldots,n\}^k\\|I|_1\leq n}}\Grat_I(P_A(h_1),\ldots,P_A(h_k),\lambda)\lambda^I
\end{equation*}
for all $\lambda\in[\mu]$ by setting $\Grat_I(P_A(h_1),\ldots,P_A(h_k),\lambda)=\Grat_I(P_A(h_1),\ldots,P_A(h_k),\mu)$.

Without loss of generality, we assume that $h_1,\ldots,h_m$ are linearly independent
and let $H=(h_1,\ldots,h_m)$. For $b\in C$ let $\lambda(b)\in\R^k$ with $(\lambda(b)_1,\ldots,\lambda(b)_m)=H^{-1}b$
and $\lambda(b)_i=0$ for $i>m$.
Then
\begin{equation*}
\begin{split}
\Phi(A,b)&=\sum_{\substack{I\in\{0,\ldots,n\}^k\\|I|_1\leq n}}\Grat_I(P_A(h_1),\ldots,P_A(h_k),\lambda(b))\lambda(b)^I\\
&=\sum_{\substack{I\in\{0,\ldots,n\}^k\\|I|_1\leq n}}
\Grat_I(P_A(h_1),\ldots,P_A(h_k),(H^{-1}b,0,\ldots,0))(H^{-1}b,0,\ldots,0)^I\\
&=\sum_{\substack{J\in\{0,\ldots,n\}^k\\|J|_1\leq n}}\Phi_J(A,b)b^J,
\end{split}
\end{equation*}
where $\Phi_J(A,b)$ is a suitably linear combination of 
$\Grat_I(P_A(h_1),\ldots,P_A(h_k),(H^{-1}b,0\ldots,0))$ for $|I|_1=|J|_1$.\\
Furthermore since $\lambda(b+\denrat(P_A(h_i))h_i)\in[\lambda(b)+\denrat(P_A(h_i))\e{i}]$ and
by Theorem\nobreakspace \ref {thm:my_rational_ehrhart_mink_sum}
\begin{equation*}
\Grat_I(P_A(h_1),\ldots,P_A(h_k),\lambda(b))=\Grat_I(P_A(h_1),\ldots,P_A(h_k),\lambda(b)+\denrat(P_A(h_i))\e{i})
\end{equation*}
we get
\begin{equation*}
\Grat_I(P_A(h_1),\ldots,P_A(h_k),\lambda(b))=\Grat_I(P_A(h_1),\ldots,P_A(h_k),\lambda(b+\denrat(P_A(h_i))h_i)).
\end{equation*}
Thus $\Phi_J(A,b)=\Phi_J(A,b+\denrat(P_A(h_i))h_i)$ for $i=1,\ldots,k$.

Since $\Grat_I(P_A(h_1),\ldots,P_A(h_m),H^{-1}b)$ is a piecewise-defined polynomial of total
degree $n-|I|_1=n-|J|_1$ in $H^{-1}b$, $\Phi_J(A,b)$ is a piecewise-defined polynomial of
total degree $n-|J|_1$ in $b$.

To show the remaining part of the theorem, we explicitely study the linear combinations of
$\Grat_I(P_A(h_1),\ldots,P_A(h_k),(H^{-1}b,0,\ldots,0))$ that results in $\Phi_J(A,b)$:
For convenience for $I,J\in\{0,\ldots,n\}^m$, we denote by 
$\mathcal{M}_1(I)$ the set of matrices $K\in\{0,\ldots,n\}^{m\times m}$
such that the row sums $\sum_{j=0}^mK_{ij}$ equal $I_i$ for all $i\in\{1,\ldots,m\}$. By $\mathcal{M}_2(I,J)$ we denote the set of those
matrices in $K\in\mathcal{M}_1(I)$, such that the column sums $\sum_{i=0}^mK_{ij}$ equal $J_j$ for all $j\in\{1,\ldots,m\}$.
For $i\in\{0,\ldots,m\}$ and $J\in\{0,\ldots,n\}^m$, we denote by $\binom{i}{J}$ the multinomial coefficient
$\frac{i!}{J_1!\cdots J_m!}$ and for $I\in\{0,\ldots,n\}^m$ and $K\in\{0,\ldots,n\}^{m\times m}$ 
by $\binom{I}{K}$ the product of multinomial coefficients $\binom{I_1}{(K_{11},\ldots,K_{1m})}\cdots\binom{I_m}{(K_{m1},\ldots,K_{mm})}$.
For $M,N\in\R^{m\times m}$  we denote by $M^{i}$ the $i$th row of $M$ and by
$M^{N}$ we denote the componentwise power $\left(M_{ij}^{N_{ij}}\right)_{ij}$.
With this notation, we have for $I\in\{0,\ldots,n\}^k$ that
\begin{equation*}
(H^{-1}b,0,\ldots,0)^I=\sum_{K\in\mathcal{M}_1(I)}\binom{I}{K}\left(H^{-1}\right)^Kb^{(\sum_{i=0}^mK_{i1},\ldots,\sum_{i=0}^mK_{im})}
\end{equation*}
if $I_{m+1}=\ldots=I_k=0$ and $(H^{-1}b,0,\ldots,0)^I=0$ otherwise.
Abbreviating $\Grat_{(I,0,\ldots,0)}(P_A(h_1),\ldots,P_A(h_k),(H^{-1}b,0,\ldots,0))$ as 
$\Grat_{I}(A,b)$ we get for $J\in\{0,\ldots,n\}^k$
\begin{equation*}
\Phi_J(A,b)=\sum_{\substack{I\in\{0,\ldots,n\}^m\\|I|_1=|J|_1}}\Grat_I(A,b)\sum_{K\in\mathcal{M}_2(I,J)}\binom{I}{K}\left(H^{-1}\right)^K.
\end{equation*}
With Theorem\nobreakspace \ref {thm:rqp_polycoeffs_diff_mink_sums} and the multi-dimensional chain rule for
derivatives, we have
\begin{equation*}\begin{split}
\frac{\partial}{\partial b_l}\Phi_J(A,b)&=\sum_{\substack{I\in\{0,\ldots,n\}^m\\|I|_1=|J|_1}}
\left(\sum_{g=1}^m-(I_g+1)\Grat_{I+\e{g}}(A,b)\left(H^{-1}\right)_{gl}\right)\sum_{K\in\mathcal{M}_2(I,J)}\binom{I}{K}\left(H^{-1}\right)^K\\
&=-\sum_{g=1}^m\sum_{\substack{I\in\{0,\ldots,n\}^m\\|I|_1=|J|_1}}
\Grat_{I+\e{g}}\sum_{K\in\mathcal{M}_2(I,J)}(I_g+1)\binom{I}{K}\left(H^{-1}\right)^{K+\E{g}{l}}.
\end{split}\end{equation*}
By shifting the index $I_g$ by one we finally find
\begin{equation*}\begin{split}
\frac{\partial}{\partial b_l}\Phi_J(A,b)&=-\sum_{\substack{I\in\{0,\ldots,n\}^m\\|I|_1=|J|_1+1}}
\Grat_{I}(A,b)\sum_{K\in\mathcal{M}_2(I,J+e(l))}\left(\sum_{g=1}^m(I_g)\binom{I-e(g)}{K-\E{g}{l}}\right)\left(H^{-1}\right)^{K}\\
&=-\sum_{\substack{I\in\{0,\ldots,n\}^m\\|I|_1=|J|_1+1}}
\Grat_{I}(A,b)\sum_{K\in\mathcal{M}_2(I,J+e(l))}\left((J_l+1)\binom{I}{K}\right)\left(H^{-1}\right)^{K}\\
&=-(J_l+1)\Phi_{J+e(l)}(A,b).
\end{split}\end{equation*}
\end{proof}

Since the leading term of $\Grat(P_A(h_1),\ldots,P_A(h_k),\lambda)$ is the volume
of $\sum_{i=1}^k\lambda_iP_A(h_i)$ we also get, that the leading term of
$\Phi_A(b)$ is the volume of $P_A(b)$. This implies, that $\vol(P_A(b))$ is a
homogeneous polynomial of degree $n$ in $b$. We refer to \cite{Xu2011} for a
closed formula of this polynomial.

\begin{example}
As an example, we consider the following polytope in dimension $n=2$ with $m=4$
inequalities:
\begin{equation*}
\begin{split}
P(a,bc,d)=\{\;(x,y)^\top\in\R^2:\quad\begin{array}[t]{rcl}2x+y&\leq&a,\\ 
-2x+y&\leq&b,\\
y&\leq&c,\\
-y&\leq&d\quad\}.
\end{array}
\end{split}
\end{equation*}
\begin{figure}[hbt]\centering
\includegraphics{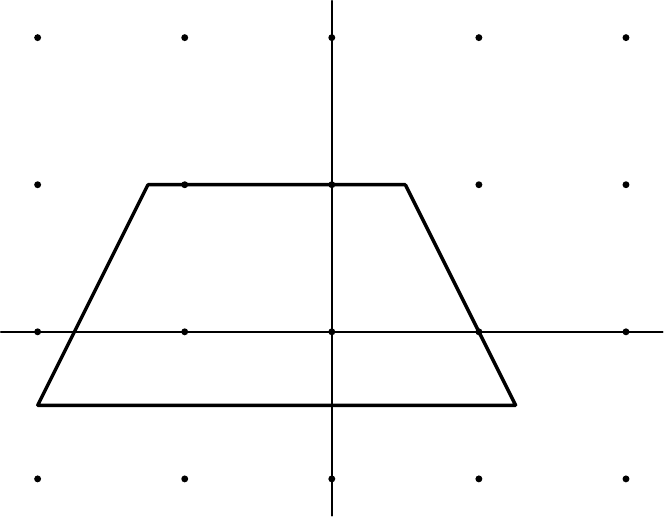}
\caption{$P_A(2,\frac72,1,\frac12)$}\label{fig:rqp_right_side} 
\end{figure}

The intersection point of both non-horizontal inequalities is $v=\left(\frac{a-b}4,\frac{a+b}2\right)$.
This polytope is nonempty, whenever $-d\leq c$ and $v_2\geq -d$, 
that is $a+b+2d\geq 0$. If $v_2\leq c$, that is $a+b-2c\leq 0$, then $P_A(a,b,c,d)$
is a triangle. Otherwise, that is, if $a+b-2c> 0$, $P$ is a proper quadrangle.
Hence, there are the following possible cones in $\mathcal{C}_A$:
\[C_{\text{point}}=\{(a,b,c,d)\in\R^4: c+d\geq 0,a+b+2d=0\}\] for a single point,
\[C_{\text{line}}=\{(a,b,c,d)\in\R^4:c+d= 0,a+b+2d>0\}\]  for a line-segment, 
\[C_{3\text{-gon}}=\{(a,b,c,d)\in\R^4: c+d>0,a+b+2d>0,a+b-2c\leq 0\}\] for the triangle and 
\[C_{4\text{-gon}}=\{(a,b,c,d)\in\R^4: c+d>0,a+b+2d>0,a+b-2c>0\}\] for the quadrangle.

Since $C_{\text{point}}\subset\cl(C_{3\text{-gon}})$ and $C_{\text{line}}\subset\cl(C_{4\text{-gon}})$
it is sufficient to investigate $\cl(C_{3\text{-gon}})$ and $\cl(C_{4\text{-gon}})$.

Here we get
\begin{equation*}
\begin{split}
\Phi&\left(A,(a,b,c,d)^\top\right)=\frac12\left(d^2-c^2+bc+ac+bd+ad\right)+\frac{a}{2}\left(1-\fracpart{d}-\fracpart{c}\right)\\
&+\frac{b}2\left(1-\fracpart{d}-\fracpart{c}\right)
+\frac{c}2\left(-\fracpart{b}+2\fracpart{c}-\fracpart{a}\right)
+\frac{d}2\left(2-2\fracpart{d}-\fracpart{b}-\fracpart{a}\right)\\
&+\bigg(\fracpart{\frac{c-a}{2}}^2+\fracpart{\frac{c-b}{2}}^2-\fracpart{\frac{a+d}{2}}^2-\fracpart{\frac{b+d}{2}}^2\\
&+\fracpart{\frac{a+d}{2}}\fracpart{a}+\fracpart{\frac{b+d}{2}}\fracpart{b}+\fracpart{\frac{c-a}{2}}\fracpart{a}
+\fracpart{\frac{c-b}{2}}\fracpart{b}\\
&-\fracpart{\frac{c-a}{2}}\fracpart{c}-\fracpart{\frac{c-b}{2}}\fracpart{c}
+\fracpart{\frac{b+d}{2}}\fracpart{d}+\fracpart{\frac{a+d}{2}}\fracpart{d}\\
&-\fracpart{\frac{c-b}{2}}-\fracpart{\frac{c-a}{2}}-\fracpart{a}-\fracpart{b}
+\fracpart{c}-\fracpart{d}+1\bigg)
\end{split}
\end{equation*}
for all $(a,b,c,d)\in\cl(C_{4\text{-gon}})$ and
\begin{equation*}
\begin{split}
\Phi&\left(A,(a,b,c,d)^\top\right)=\frac18\left(a^2+b^2+4d^2+2ab+4ad+4bd\right)\\
&+\frac{a}4\left(2-\fracpart{a}-\fracpart{b}-2\fracpart{d}\right)+\frac{b}4\left(2-\fracpart{a}-\fracpart{b}-2\fracpart{d}\right)\\
&+\frac{d}2\left(2-\fracpart{a}-\fracpart{b}-2\fracpart{d}\right)\\
&+\bigg(2\fracpart{\frac{a-b}4}^2-\fracpart{\frac{b+d}2}^2-\fracpart{\frac{a+d}2}^2
+\fracpart{a}\fracpart{\frac{a+d}2}+\fracpart{b}\fracpart{\frac{b+d}2}\\
&-\fracpart{a}\fracpart{\frac{a-b}4}-\fracpart{b}\fracpart{\frac{b-a}4}
+\fracpart{d}\fracpart{\frac{b+d}2}+\fracpart{d}\fracpart{\frac{a+d}2}\\&
-\fracpart{d}-2\fracpart{\frac{a-b}4}+1\bigg)
\end{split}
\end{equation*}
for all $(a,b,c,d)\in\cl(C_{3\text{-gon}})$.

\end{example}

\providecommand{\bysame}{\leavevmode\hbox to3em{\hrulefill}\thinspace}
\providecommand{\MR}{\relax\ifhmode\unskip\space\fi MR }
\providecommand{\MRhref}[2]{%
  \href{http://www.ams.org/mathscinet-getitem?mr=#1}{#2}
}
\providecommand{\href}[2]{#2}


\begin{thebibliography}{10}

\bibitem{Koeppe2010}
Velleda Baldoni, Nicole Berline, Matthias K{\"o}ppe, and Mich{\`e}le Vergne,
  \emph{{Intermediate Sums on Polyhedra: Computation and Real Ehrhart Theory}},
  CoRR abs/1011.6002 (2010).

\bibitem{Beck1999}
Matthias Beck, \emph{A closer look at lattice points in rational simplices},
  Electronic Journal of Combinatorics \textbf{6} (1999), Research Paper 37, 9
  pp. (electronic).

\bibitem{Beck2003}
\bysame, \emph{Multidimensional ehrhart reciprocity}, Journal of Combinatorial
  Theory, Series A \textbf{97} (2002), no.~1, 187--194.

\bibitem{Beck&Robins2006}
Matthias Beck and Sinai Robins, \emph{{Computing the Continuous Discretely}},
  Springer, 2007.

\bibitem{Dahmen&Micchelli}
Wolfgang Dahmen and Charles~A. Micchelli, \emph{The number of solutions to
  linear {D}iophantine equations and multivariate splines}, Transactions of the
  American Mathematical Society \textbf{308} (1988), no.~2, 509--532.

\bibitem{EHRHART1962}
Eug\`ene Ehrhart, \emph{{Sur les poly\`edres rationnels homoth\'etiques \`a n
  dimensions}}, Comptes Rendus des S\'eances de l'Acad\'emie des Sciences
  S\'erie A, \textbf{254} (1962), 616--618.

\bibitem{Linke}
Eva Linke, \emph{{Rational Ehrhart quasi-polynomials}}, Journal of
  Combinatorial Theory, Series A \textbf{118} (2011), no.~7, 1966 -- 1978.

\bibitem{McMullen1973}
Peter McMullen, \emph{Representations of polytopes and polyhedral sets},
  Geometriae Dedicate \textbf{2} (1973), 83--99.

\bibitem{McMullen1978}
\bysame, \emph{Lattice invariant valuations on rational polytopes}, Archiv der
  Mathematik \textbf{31-1} (1978), 509--516.

\bibitem{Mount}
John Mount, \emph{Fast unimodular counting}, Combinatorics, Probability and
  Computing \textbf{9} (2000), 277--285, Cambridge University Press.

\bibitem{SCHNEIDER93}
Rolf Schneider, \emph{{Convex bodies: The Brunn-Minkowski Theory}}, Cambridge
  University Press, 1993.

\bibitem{Sturmfels95}
Bernd Sturmfels, \emph{On vector partition functions}, Journal of Combinatorial
  Theory, Series A \textbf{72} (1995), 302--309.

\bibitem{Xu2011}
Zhiqiang Xu, \emph{Multivariate splines and polytopes}, Journal of
  Approximation Theory \textbf{163} (2011), no.~3, 377 -- 387.

\bibitem{ZIEGLER}
Günter~M. Ziegler, \emph{Lectures on polytopes}, Springer, 1995.

\end{thebibliography}
\end{document}